\newtheorem{thm}{Theorem}[section]
\newtheorem{lem}[thm]{Lemma}
\newtheorem{prop}[thm]{Proposition}
\newtheorem{cor}[thm]{Corollary}
\newtheorem{rem}[thm]{Remark}
\def \N {\mathbb N}
\def \Z {\mathbb Z}
\numberwithin{equation}{section}
\begin{document}

\title{chaotic behavior of group actions}

\subjclass[2010]{Primary 37B05; Secondary 37D45, 74H65}
\keywords{group actions, weakly mixing sets, Li-Yorke chaos, shift of finite type, homoclinic equivalence relation, topological entropy, amenable groups, residually finite groups}
\thanks{* Corresponding author (chiaths.zhang@gmail.com)}

\author{Zhaolong Wang and Guohua Zhang $^*$}

\address{\hskip- \parindent
Zhaolong Wang, School of Mathematical Sciences, Fudan University, Shanghai 200433, China}
\email{11210180006@fudan.edu.cn}

\address{\hskip- \parindent
Guohua Zhang, School of Mathematical Sciences and LMNS, Fudan University and Shanghai Center for Mathematical Sciences, Shanghai 200433, China}
\email{chiaths.zhang@gmail.com}

\begin{abstract}
In this paper we study chaotic behavior of actions of a countable discrete group acting on a compact metric space by self-homeomorphisms.

For actions of a countable discrete group $G$, we introduce local weak mixing and Li-Yorke chaos; and prove that local weak mixing implies Li-Yorke chaos if $G$ is infinite, and positive topological entropy implies local weak mixing if $G$ is an infinite countable discrete amenable group.
Moreover, when considering a shift of finite type for actions of an infinite countable amenable group $G$, if the action has positive topological entropy then its homoclinic equivalence relation is non-trivial, and the converse holds true if additionally $G$ is residually finite and the action contains a dense set of periodic points.
\end{abstract}

\maketitle

\markboth{Chaotic Behavior of Group Actions}{Z. L. Wang and G. H. Zhang}


\section{Introduction}

The topological theory of dynamical systems concentrates on the study of qualitative aspects of dynamics of continuous transformations by various topological tools. Recently, the notion of weakly mixing subsets (of all orders) was introduced for dynamical systems (of $\mathbb{N}$-actions) by Blanchard and Huang in \cite{BlanchardHuang}, which measures local complexity of the structure of orbits. It was proved that every $\mathbb{N}$-action with positive topological entropy has a non-trivial so-called weakly mixing subset (of all orders) \cite[Theorem 4.5]{BlanchardHuang}. For a better understanding of the dynamics over subsets, in \cite{PZstudia} Oprocha and the second author of the present paper introduced the notion of weakly mixing subsets of order $n$ for any given $n\in \mathbb{N}$.
 Generally speaking, a weakly mixing subset $A$ mimics the synchronization of transfer times of continuous transformations, but this synchronization is local, possibly only for open subsets intersecting $A$. As shown by results in a series of papers \cite{PZETDS, PZstudia, PZtopoappli, PZadvance}, though the definition of weakly mixing subsets is analogous to the case of transformations, dynamical properties of weakly mixing subsets are much harder to manage even we consider dynamical systems of $\mathbb{N}$-actions. For example, it is well known that weak mixing of order 2 is equivalent to weak mixing of all orders for a single transformation \cite{Furstenberg}. While, this is no longer true for weakly mixing subsets as shown by \cite{PZETDS, PZstudia}, in fact, for each $n\ge 2$ there are minimal dynamical systems of $\mathbb{N}$-actions which contain non-trivial weakly mixing subsets of order $n$ and all weakly mixing subsets of order $n+ 1$ in the systems are trivial \cite{PZETDS}. There are many other richer dynamics over weakly mixing subsets observed in \cite{PZETDS, PZstudia, PZtopoappli, PZadvance}.

It is very natural to generalize the definition of weakly mixing subsets from $\mathbb{N}$-actions to actions of a countable discrete group acting on a compact metric space.

{\it Throughout the whole paper, $G$ will be a countable discrete group, and $(X, G)$ will be a $G$-system} in the sense that the group $G$ acts on a compact metric space $X$ (with metric $\rho$) as a group of self-homeomorphisms of the space $X$. We shall write $g: X\rightarrow X, x\mapsto g x$; and naturally $(X^m, G), m\in \mathbb{N}$ will be a $G$-system with $g: X^m\rightarrow X^m, (x_1, \cdots, x_m)\mapsto (g x_1, \cdots, g x_m)$ for all $g\in G$ and $x, x_1, \cdots, x_m\in X$.

 For $\emptyset\neq K_1, K_2\subset X$ and $x\in X$ we set $N (K_1, K_2)= \{g\in G: g K_1\cap K_2\neq \emptyset\}$ and $N (x, K_2)= N (\{x\}, K_2)= \{g\in G: g x\in K_2\}$. Let $\emptyset\neq K\subset X$. We say \emph{$K$ trivial} if it is a singleton; else, we say \emph{$K$ non-trivial}.
Let $n\in \mathbb{N}$. Following \cite{BlanchardHuang, PZstudia}, we say that \emph{$K$ is weakly mixing of order $n$} if $N (U_1\cap K, V_1)\cap \cdots\cap N (U_n\cap K, V_n)\neq \emptyset$ whenever all $U_1, V_1, \cdots, U_n, V_n$ are open subsets of $X$ intersecting $K$; and \emph{weakly mixing (of all orders)} if it is weakly mixing of order $m$ for each $m\in \mathbb{N}$. Observe that, $K$ is weakly mixing of order $n$ if and only if the closure of $K$ is weakly mixing of order $n$, and weak mixing of order $n+ 1$ implies weak mixing of order $n$.

Then we can generalize \cite[Theorem 4.5]{BlanchardHuang} to a more general setting if the group $G$ is good enough (in the sense that it is an infinite countable amenable group).

\begin{thm} \label{wmamenable}
Assume that the system $(X, G)$ has positive topological entropy, where $G$ is an infinite countable discrete amenable group. Then it admits a non-trivial weakly mixing subset of all orders. In fact,
$$h_\text{top} (G, X)= \sup \{h_\text{top} (G, K): \emptyset\neq K\subset X\ \text{is weakly mixing of all orders}\}.$$
\end{thm}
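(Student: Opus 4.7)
The plan is to adapt the argument of Blanchard and Huang to amenable group actions using the Kerr--Li theory of IE-tuples (combinatorial independence), which extends the classical entropy tuple theory of the $\mathbb{N}$-action case and supplies the combinatorial tools needed in the non-$\mathbb{Z}$ setting. The first step is to invoke the Kerr--Li result: since $G$ is infinite countable amenable and $h_\text{top} (G, X)> 0$, the set $\mathrm{IE}_n (X, G)\subseteq X^n$ of IE-$n$-tuples contains non-diagonal points for every $n\ge 2$. By definition, $(x_1, \ldots, x_n)\in \mathrm{IE}_n (X, G)$ if and only if every product neighborhood $U_1\times \cdots\times U_n$ has positive independence density along some F{\o}lner sequence: there exist $c> 0$ and arbitrarily large F{\o}lner sets $F$ such that one can find $J\subseteq F$ with $|J|\ge c|F|$ and $\bigcap_{g\in J} g^{-1} U_{\omega (g)}\neq \emptyset$ for every $\omega: J\to \{1, \ldots, n\}$.

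Next I would translate combinatorial independence into weak mixing. Given open sets $U_1, V_1, \ldots, U_n, V_n$ intersecting a candidate set $K$ built from IE-data, choose IE-configurations with the relevant coordinates inside them; applying the positive independence density to a map $\omega$ that alternates ``$U$-type'' and ``$V$-type'' selections produces $g\in G$ with $g (U_k\cap K)\cap V_k\neq \emptyset$ for all $k$, so $K$ is weakly mixing of order $n$. A natural candidate for $K$ is the image of $\mathrm{IE}_n (X, G)$ under a coordinate projection, or more flexibly a minimal subset of the orbit closure of an IE-point. Since the coordinate-forgetting maps $\mathrm{IE}_{n+ 1} (X, G)\to \mathrm{IE}_n (X, G)$ give a coherent nested family, a standard diagonal/compactness argument yields a single $K$ that is weakly mixing of all orders simultaneously.

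For the entropy equality I would combine this construction with the local variational principle in the amenable setting. Given $\varepsilon> 0$, pick an invariant measure $\mu$ with $h_\mu (G, X)\ge h_\text{top} (G, X)- \varepsilon$; the Kerr--Li localization of IE-tuples places them inside $\mathrm{supp} (\mu)$, so $K$ can be arranged with $K\subseteq \mathrm{supp} (\mu)$, yielding $h_\text{top} (G, K)\ge h_\mu (G, X)\ge h_\text{top} (G, X)- \varepsilon$; the reverse inequality $h_\text{top} (G, K)\le h_\text{top} (G, X)$ is trivial. The hardest step will be this entropy lower bound on $K$: IE-tuples give \emph{qualitative} independence, but converting it into a \emph{quantitative} entropy estimate for a proper subsystem requires the Ornstein--Weiss machinery, a careful choice of F{\o}lner sequences, and control over the behaviour of entropy under restriction to an invariant subset. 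Extracting a single $K$ that works across all orders also requires delicate inverse-limit handling, which is where the amenable setting departs most significantly from the classical $\mathbb{N}$-action treatment.
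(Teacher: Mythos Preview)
Your approach via Kerr--Li IE-tuples is genuinely different from the paper's, which is entirely ergodic-theoretic: the paper takes an ergodic $\mu$ with $h_\mu(G,X)$ close to $h_\text{top}(G,X)$, disintegrates $\mu=\int_X\mu_x\,d\mu(x)$ over its Pinsker $\sigma$-algebra $P_\mu(G)$, and shows that for $\mu$-a.e.\ $x$ the set $K=\text{supp}(\mu_x)$ is weakly mixing of all orders. The weak mixing comes from the fact (Proposition~\ref{1307172302}) that the measure $\lambda_{n,\mu}$ on $X^n$ built from the Pinsker fibers is ergodic and that $\text{supp}(\mu_x)^n$ meets its set of transitive points densely. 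The entropy bound drops out of the identity $h_\mu(G,\mathcal{U})=h_\mu(G,\mathcal{U}\mid P_\mu(G))$ together with $H_{\mu_x}(\mathcal{U}_{F_n})\le \log N(\mathcal{U}_{F_n},\text{supp}(\mu_x))$, which immediately yields $h_\mu(G,X)\le \text{ess-sup}_\mu\, h_\text{top}(G,\text{supp}(\mu_x))$.

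Your route can be made to work for the \emph{existence} of a non-trivial weakly mixing set (indeed the paper itself sketches this in Theorem~\ref{1307151844}, though via \cite[Theorem 8.1]{KerrLisoficindependence} rather than your alternating-$\omega$ argument, which as written is too vague to pin down $K$). But your entropy-equality step contains a real gap: from ``$K\subseteq\text{supp}(\mu)$'' you conclude ``$h_\text{top}(G,K)\ge h_\mu(G,X)$'', and this inference is simply false. The containment $K\subseteq\text{supp}(\mu)$ gives no lower bound whatsoever on $h_\text{top}(G,K)$; a tiny (even finite) set can sit inside a large-entropy support. You would need the reverse containment $\text{supp}(\mu)\subseteq K$, or some argument that $\mu$-typical orbit complexity is captured by $K$, and nothing in the IE-tuple construction you describe provides that. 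This is precisely why the paper's Pinsker-fiber approach is effective here: the disintegration $\mu_x$ lives \emph{on} $\text{supp}(\mu_x)$, so the conditional entropy over $P_\mu(G)$ is bounded by the covering numbers of that support, giving the quantitative estimate you are missing.
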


In the setting of dynamical systems of $\mathbb{N}$-actions, the notion of weakly mixing subsets always involves complicated dynamics of the given system, for example, the existence of a non-trivial weakly mixing set implies chaos in the sense of Li and Yorke \cite[Definition 4.1]{BlanchardHuang} and \cite[Proposition 5.1]{PZETDS} (in fact, it reflects a much stronger chaos as shown by \cite[Theorem 5.3]{PZETDS}).
Consequently, each dynamical system of an $\mathbb{N}$-action with positive topological entropy is Li-Yorke chaotic, which was observed first by Blanchard, Glasner, Kolyada and Maass in \cite{BlanchardGlasnerKolyadaMaass}.

The definition of Li-Yorke chaos can be generalized to actions of a countable discrete group \cite{KerrLisoficindependence} as follows.
We say that $(x_1, x_2)\in X^2$ is a \emph{Li-Yorke pair} if
$$\limsup_{G\ni g\rightarrow \infty} \rho (g x_1, g x_2)> 0\ \text{and}\ \liminf_{G\ni g\rightarrow \infty} \rho (g x_1, g x_2)= 0,$$
where the limit supremum and limit infimum mean the limits of
$$\sup_{g\in G\setminus F} \rho (g x_1, g x_2)\ \text{and}\ \inf_{g\in G\setminus F} \rho (g x_1, g x_2),$$
respectively, over the net of $F\in \mathcal{F}_G$. Where, we denote by $\mathcal{F}_G$ the set of all non-empty finite subsets of $G$. In fact, this is equivalent to say that $(x_1, x_2)\in X^2$ is a Li-Yorke pair if and only if there exists a sequence $\{g_n: n\in \mathbb{N}\}$ of distinct elements of $G$ and a sequence $\{g_n': n\in \mathbb{N}\}$ of distinct elements of $G$ such that
$$\limsup_{n\rightarrow \infty} \rho (g_n x_1, g_n x_2)> 0\ \text{and}\ \liminf_{n\rightarrow \infty} \rho (g_n' x_1, g_n' x_2)= 0,$$
respectively. We say that the $G$-system $(X, G)$ is \emph{Li-Yorke chaotic} if there exists an uncountable subset $Z\subset X$ such that every non-diagonal pair $(x_1, x_2)\in Z^2$ is a Li-Yorke pair. Obviously, the definitions of Li-Yorke pairs and Li-Yorke chaos work only for an infinite group $G$. These definitions came from \cite{LiYorke}, and their relationship with entropy of a $G$-system was explored first for $\mathbb{Z}$-actions in \cite{BlanchardGlasnerKolyadaMaass} and then for actions of an infinite countable sofic group recently by Kerr and Li in \cite{KerrLisoficindependence}.

As shown by Theorem \ref{1305150007}, weakly mixing subsets also present chaotic behavior of actions even we consider dynamical systems of a countable discrete group.

\begin{thm} \label{1305150007}
Assume that $(X, G)$ admits a non-trivial weakly mixing subset of order 2. Then $G$ is infinite and $(X, G)$ is Li-Yorke chaotic.
\end{thm}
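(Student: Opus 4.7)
The plan is to build an uncountable Cantor scrambled set via the Kuratowski--Mycielski theorem, applied inside $K$. Replacing $K$ by its closure (which preserves weak mixing of order $2$ by the remark in the paper), I may assume $K$ is closed and compact, and I will show in turn: $G$ is infinite, $K$ is perfect, and a dense $G_\delta$ subset of $K \times K$ consists of Li-Yorke pairs.

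\emph{Preliminary structural observations.} Fix $x_1 \neq x_2$ in the non-trivial set $K$. For infinitude of $G$, apply weak mixing of order $2$ to $U_1 = U_2 = V_1 = B(x_1, 1/n)$ and $V_2 = B(x_2, 1/n)$, producing $g_n \in G$ and $y_n, z_n \in B(x_1, 1/n) \cap K$ with $g_n y_n \in V_1$ and $g_n z_n \in V_2$; if $G$ were finite, some element $g$ would recur along a subsequence, but then $y_n, z_n \to x_1$ combined with continuity of $g$ would give $\rho(g y_n, g z_n) \to 0$, contradicting $\rho(g y_n, g z_n) \geq \rho(x_1, x_2) - 2/n$. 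For perfectness, an isolated $x \in K$ would admit an open $U$ with $U \cap K = \{x\}$; picking disjoint open neighbourhoods $V_1 \ni x$ and $V_2$ of some other point of $K$ with $V_2 \cap K \neq \emptyset$, weak mixing of order $2$ applied to $U, V_1, U, V_2$ would demand $g \in G$ with $g x \in V_1 \cap V_2 = \emptyset$, which is impossible.

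The main technical step, which I expect to be the principal obstacle, is the following \emph{infiniteness lemma}: for any open $U_1, V_1, U_2, V_2$ intersecting $K$, the set $N(U_1 \cap K, V_1) \cap N(U_2 \cap K, V_2)$ is infinite. Suppose for contradiction that it equals $\{h_1, \ldots, h_k\}$, and fix any $z_i \in U_i \cap K$ and $y_i \in V_i \cap K$. Applying weak mixing of order $2$ to the shrinking neighbourhoods $\tilde U_i^{(n)} = B(z_i, 1/n) \cap U_i$ and $\tilde V_i^{(n)} = B(y_i, 1/n) \cap V_i$ produces $h^{(n)} \in \{h_1, \ldots, h_k\}$ and $x_i^{(n)} \in \tilde U_i^{(n)} \cap K$ with $h^{(n)} x_i^{(n)} \in \tilde V_i^{(n)}$. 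Along a subsequence on which $h^{(n)} = h_{j_0}$ is constant, $x_i^{(n)} \to z_i$ and $h_{j_0} x_i^{(n)} \to y_i$, so continuity forces $h_{j_0} z_i = y_i$ for $i = 1, 2$. Since $(y_1, y_2) \in (V_1 \cap K) \times (V_2 \cap K)$ was arbitrary while $(z_1, z_2)$ is fixed, this gives $(V_1 \cap K) \times (V_2 \cap K) \subset \{(h_1 z_1, h_1 z_2), \ldots, (h_k z_1, h_k z_2)\}$, a finite set, contradicting the fact that $V_1 \cap K$, as a non-empty relatively open subset of the perfect compact $K$, is uncountable.

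With the infiniteness lemma in hand, for any finite $F \subset G$, $n \in \mathbb{N}$, and open $W_1, W_2$ meeting $K$: picking any $y \in K$ and applying the lemma with $V_1 = V_2 = B(y, 1/(2n))$ yields $g \in G \setminus F$ and $(x_1, x_2) \in (W_1 \cap K) \times (W_2 \cap K)$ with $\rho(g x_1, g x_2) < 1/n$; fixing once and for all $y_1 \neq y_2 \in K$ and $\delta_0 > 0$ with $3\delta_0 < \rho(y_1, y_2)$, and using $V_i = B(y_i, \delta_0)$ in the lemma, one gets $g \in G \setminus F$ with $\rho(g x_1, g x_2) > \delta_0$. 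Hence the proximal set and the set $\{\limsup_{G \ni g \to \infty} \rho(g x_1, g x_2) > \delta_0\}$ each contain a dense $G_\delta$ in $K \times K$, and so does their intersection, which consists entirely of Li-Yorke pairs. The Kuratowski--Mycielski theorem applied to the compact perfect Polish space $K$ then produces a Cantor set $C \subset K \subset X$ every non-diagonal pair of which is Li-Yorke, so $(X, G)$ is Li-Yorke chaotic.
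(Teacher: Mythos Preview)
Your proof is correct and follows the same overall architecture as the paper---establish that $K$ is perfect, prove an infiniteness lemma for the simultaneous hitting-time sets, then invoke a Mycielski-type theorem---but the details differ in two places worth noting.

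First, your proof of the infiniteness lemma is genuinely different and more elementary than the paper's. The paper (Proposition~\ref{1307161940}) goes through the structural characterizations of Proposition~\ref{1305142253}: it passes to the transitive subsystem $(X_{\overline{K},n},G)$ generated by $\overline{K}^n$, picks a transitive point inside $\overline{K}^n\cap\prod U_i$, argues that $X_{\overline{K},n}$ is perfect, and then applies Lemma~\ref{1307161017}(\ref{1307171211}) to conclude that the return-time set of this point is infinite. Your argument is a direct contradiction: if the hitting set were $\{h_1,\dots,h_k\}$, a pigeonhole-plus-continuity argument forces $(V_1\cap K)\times(V_2\cap K)$ into the finite set $\{(h_jz_1,h_jz_2):j\le k\}$, contradicting perfectness of $K$. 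Your route avoids Proposition~\ref{1305142253}(\ref{1307171022})(\ref{1307171023}) entirely and is self-contained.

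Second, the paper applies Mycielski in $K^{2n}$ (so $K^4$ for $n=2$) to obtain a dense Mycielski set $M$ with the strong approximation property of Proposition~\ref{1305150017}: for distinct $x_1,x_2\in M$ and \emph{arbitrary} targets $x_1',x_2'\in M$ one has $\liminf_{g\to\infty}\max_i\rho(gx_i,x_i')=0$, from which Li-Yorke chaos is then read off by choosing $x_1'=x_2'$ for proximality and $x_1'=x_2,\ x_2'=x_1$ for separation. You instead work directly in $K^2$, showing that both the proximal relation and a uniform separation relation are residual there. Your approach is leaner for the stated theorem; the paper's buys the stronger conclusion of Proposition~\ref{1305150017} as a byproduct.

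One cosmetic slip: the dense $G_\delta$ you actually construct lies in $\{\limsup_{g\to\infty}\rho(gx_1,gx_2)\ge\delta_0\}$ rather than $\{\limsup>\delta_0\}$; since $\delta_0>0$ this is harmless for the conclusion.
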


And then as a direct corollary of Theorem \ref{wmamenable} and Theorem \ref{1305150007}, we have:

\begin{cor} \label{1307182353}
$(X, G)$ is Li-Yorke chaotic if it has positive topological entropy, where $G$ is an infinite countable discrete amenable group.
\end{cor}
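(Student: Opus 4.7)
The plan is simply to chain Theorem~\ref{wmamenable} and Theorem~\ref{1305150007}, since the hypotheses of the corollary coincide exactly with the hypotheses of Theorem~\ref{wmamenable}, and the conclusion of Theorem~\ref{wmamenable} is precisely the type of input Theorem~\ref{1305150007} consumes.

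Concretely, first I would invoke Theorem~\ref{wmamenable} under the standing assumption that $G$ is an infinite countable discrete amenable group and $(X,G)$ has positive topological entropy. This produces a non-trivial subset $K \subset X$ that is weakly mixing of all orders. By the monotonicity observation recorded just after the definition of weakly mixing subsets (weak mixing of order $n+1$ implies weak mixing of order $n$), the set $K$ is in particular weakly mixing of order $2$.

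Then I would apply Theorem~\ref{1305150007} to the non-trivial weakly mixing subset $K$ of order $2$; its conclusion is exactly that $(X,G)$ is Li-Yorke chaotic. The infiniteness of $G$ is already part of our assumption, so the ``$G$ is infinite'' clause of Theorem~\ref{1305150007} poses no obstruction. There is no genuinely hard step: everything nontrivial has been pushed into the two cited theorems, and the only thing one needs to observe between them is the trivial monotonicity of weak mixing in the order.
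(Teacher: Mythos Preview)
Your proposal is correct and matches the paper's own approach exactly: the corollary is stated in the paper as ``a direct corollary of Theorem~\ref{wmamenable} and Theorem~\ref{1305150007},'' with no additional argument given. The only extra detail you make explicit is the monotonicity of weak mixing in the order, which is indeed the trivial bridge between the two theorems.
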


Note that, using Theorem \ref{1305150007} and Theorem \ref{1307151844}, Corollary \ref{1307182353} is in fact a special case of \cite[Corollary 8.4]{KerrLisoficindependence}, which is explained at the end of the paper.

As shown by the above results, all actions of a countable discrete group with positive topological entropy present very complicated limit behavior of points in the system. There are many other such evidences. Since its introduction in \cite{Schmidt95}, the homoclinic equivalence relation plays a very important role in the study of actions of a countable discrete group on a compact metric space by self-homeomorphisms as shown by \cite{BowenLi, ChungLi, EinsiedlerSchmidt, KatokSchmidt95, LindSchmidt, LindSchmidtVerbitskiy, Schmidt95, SchmidtVerbitskiy}, which was first explored for a shift of finite type of a $\mathbb{Z}^d$-action ($d\in \mathbb{N}$) by Schmidt \cite[Proposition 2.1]{Schmidt95}. Note that every expansive $\mathbb{Z}^d$-action by automorphisms of a compact zero-dimensional abelian group is topologically conjugate to a shift of finite type of a $\mathbb{Z}^d$-action \cite{KitchensSchmidt89}.
For an action of the integer group $\mathbb{Z}$, the relationship between topological entropy and asymptotic pairs (a variation of the homoclinic equivalence relation in the setting of a $\mathbb{Z}$-action) was observed by Blanchard, Host and Ruette in \cite{BlanchardHostRuette}.

The following result generalizes \cite[Proposition 2.1]{Schmidt95} to a shift of finite type for actions of an infinite countable residually finite amenable group (see \S \ref{homoclinic} for a detailed introduction of all related definitions).

\begin{thm} \label{1308020015}
Let $X\subset \{1, \cdots, m\}^G$ be a shift of finite type containing a dense set of periodic points, where $G$ is an infinite countable residually finite amenable group. Then $(X, G)$ has positive topological entropy if and only if its homoclinic equivalence relation $H_X$ is non-trivial.
\end{thm}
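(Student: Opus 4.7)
For the forward direction (positive entropy $\Rightarrow$ non-trivial $H_X$), which does not need residual finiteness or density of periodic points, I would apply a F\o lner-boundary pigeonhole that is purely local. Let $W \subset G$ be a finite set so that $X$ is defined by a list $\mathcal{A} \subset \{1,\dots,m\}^W$ of admissible patterns, and fix a F\o lner sequence $\{F_n\} \subset \mathcal{F}_G$. Setting $F_n^\circ = \{g \in F_n : g W^{-1} W \subset F_n\}$, the F\o lner property gives $|F_n \setminus F_n^\circ| = o(|F_n|)$. Writing $L_{F_n}(X)$ for the set of admissible $F_n$-patterns, one has $|L_{F_n}(X)| \geq e^{(h - o(1))|F_n|}$ with $h = h_{\text{top}}(G, X) > 0$, while the number of restrictions to $F_n \setminus F_n^\circ$ is at most $m^{|F_n \setminus F_n^\circ|} = e^{o(|F_n|)}$. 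Pigeonhole thus yields distinct $p, p' \in L_{F_n}(X)$ agreeing on $F_n \setminus F_n^\circ$ and differing on a non-empty $E \subset F_n^\circ$. Fixing any extension $u \in X$ of $p$ and defining $z$ by $z|_E = p'|_E$ and $z = u$ off $E$, the definition of $F_n^\circ$ forces every $W$-window meeting $E$ into $F_n$, where $z$ coincides with an admissible extension of $p'$. Hence $z \in X$, and $(u, z)$ is a non-trivial pair in $H_X$.

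For the converse I would amplify a single non-trivial $(x, y) \in H_X$ into exponentially many admissible $F_n$-patterns. Let $A = \{g : x(g) \neq y(g)\}$ (finite) and $B = A W^{-1} W A^{-1}$, a finite subset of $G$. Density of periodic points supplies $z \in X$ with $z|_{A W^{-1} W} = x|_{A W^{-1} W}$ whose stabilizer $\widetilde\Gamma$ has finite index in $G$. By residual finiteness, choose a finite-index normal $\Gamma_0 \trianglelefteq G$ with $\Gamma_0 \cap B = \{e\}$ and set $\Gamma = \widetilde\Gamma \cap \Gamma_0$: then $\Gamma$ has finite index, $\Gamma \cap B = \{e\}$, and $z$ is $\Gamma$-periodic. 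For each finite $S \subset \Gamma$ define the toggle $w_S$ by $w_S(\gamma a) = y(a)$ when $\gamma \in S, a \in A$, and $w_S = z$ otherwise. The condition $\Gamma \cap B = \{e\}$ prevents any $W$-window from meeting two distinct translates $\gamma A, \gamma' A$ with $\gamma \neq \gamma' \in \Gamma$, while the $\Gamma$-periodicity of $z$ together with $z|_{A W^{-1} W} = x|_{A W^{-1} W}$ shows that, on any window meeting a unique $\gamma A$, $w_S$ locally equals $\gamma y \in X$; hence $w_S \in X$. Writing $F_n^\sharp = \{g \in F_n : gA \subset F_n\}$ so that $|F_n \setminus F_n^\sharp| = o(|F_n|)$, distinct $S \subset \Gamma \cap F_n^\sharp$ produce distinct restrictions $w_S|_{F_n}$, giving $|L_{F_n}(X)| \geq 2^{|\Gamma \cap F_n^\sharp|}$. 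Since $|\Gamma \cap F_n^\sharp|/|F_n| \to 1/[G : \Gamma]$ by the standard coset-counting fact for finite-index subgroups along a F\o lner sequence, $h_{\text{top}}(G, X) \geq (\log 2)/[G : \Gamma] > 0$.

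The main technical obstacle lies in the converse, where three finite data --- the difference set $A$, the SFT window $W$, and the periodicity subgroup $\Gamma$ --- must be simultaneously coordinated. Density of periodic points produces a periodic approximation $z$ of $x$ whose stabilizer $\widetilde\Gamma$ is not under direct control; residual finiteness is exactly what lets us refine $\widetilde\Gamma$ to a finite-index subgroup $\Gamma$ that still has $z$ as a periodic point and moreover avoids the finite collision set $B = A W^{-1} W A^{-1}$. Without this coordination the toggles at distinct cosets $\gamma A$ cannot be guaranteed to commute while preserving SFT admissibility, and the exponential lower bound on $|L_{F_n}(X)|$ would collapse.
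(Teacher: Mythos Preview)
Your proof is correct and follows the same overall strategy as the paper's. The forward direction is essentially identical: the paper phrases it contrapositively (if $H_X$ is trivial then $\pi_{F_n}(X)$ is determined by its restriction to the boundary layer $F_n\cap FF^{-1}F_n^c$, forcing entropy zero), while you phrase it as a pigeonhole on that same boundary layer followed by the same splicing construction. The converse also follows the paper's scheme --- approximate $x$ by a periodic point, then toggle between the $x$- and $y$-patterns at well-separated translates indexed by a finite-index subgroup --- but you achieve the separation differently. The paper invokes Weiss's tiling F\o lner sequence (a sequence of normal subgroups $G_n\searrow\{e_G\}$ with $\{F_n g: g\in G_n\}$ partitioning $G$) and uses the disjointness of the tiles $F_M s$ for $s\in G_M$ to keep the toggles from interfering; you instead use residual finiteness directly to produce a finite-index normal subgroup $\Gamma_0$ avoiding the finite collision set $B=AW^{-1}WA^{-1}$, which guarantees that no $W$-window sees two translates $\gamma A$. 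Your route is slightly more elementary in that it avoids citing the tiling F\o lner theorem, and the final entropy bound via $|\Gamma\cap F_n|/|F_n|\to 1/[G:\Gamma]$ replaces the paper's more hands-on estimate \eqref{1307312019}. One small caution: your definitions of $F_n^\circ$, $B$, and the window translates implicitly use a left-shift convention, whereas the paper acts by $g'(x_g)=(x_{gg'})$, so the precise shapes (e.g.\ $gW^{-1}W$ versus $WW^{-1}g$) need to be matched to whichever convention you fix; the argument is unaffected once this is done consistently.
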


Recall that, if $G$ is a polycyclic-by-finite group acting on a compact abelian group expansively by automorphisms, the relationship between topological entropy and homoclinic points (another variation of the homoclinic equivalence relation in this setting) was discussed recently by Chung and Li in \cite{ChungLi}; and if $G$ is an infinite countable discrete amenable group with an algebraic past which includes $\mathbb{Z}^d$ for all $d\in \mathbb{N}$, the chaotic properties and asymptotic limit behaviors in a $G$-system with positive topological entropy were studied recently by Huang, Xu and Yi in \cite{HuangXuYi}. For a group $G$ with the unit $e_G$, by an \emph{algebraic past} $\mathcal{P}\subset G$ we mean that $\mathcal{P}$ satisfies $\mathcal{P}\cap \mathcal{P}^{- 1}= \emptyset, \mathcal{P}\cup \mathcal{P}^{- 1}\cup \{e_G\}= G$ and $\mathcal{P}\cdot \mathcal{P}\subset \mathcal{P}$.

The paper is organized as follows. In section 2 we study local weak mixing for actions of a general countable discrete group and then prove Theorem \ref{1305150007}. In section 3 we give some preliminaries of entropy theory for actions of an infinite countable discrete amenable group which will be used in later discussions. In section 4 after introducing the notion of a shift of finite type and homoclinic equivalence relation for an action of an infinite countable discrete amenable group, we prove
Theorem \ref{1308020015}. In section 5 we first prove Theorem \ref{wmamenable}, and then give some comments of the results by Kerr and Li in \cite{KerrLisoficindependence} related to the topic of the present paper.

\section{Weak mixing in countable discrete group actions}

In this section we study local weak mixing for actions of a countable discrete group. As a consequence, we prove that the existence of a non-trivial weakly mixing subset implies Li-Yorke chaos, c.f. Theorem \ref{1305150007} for details. We also prove that each weakly mixing subset in an equicontinuous action must be trivial.

Recall that $(X, G)$ is a $G$-system. We say that it is \emph{transitive} if $N (U, V)\neq \emptyset$ for any non-empty open subsets $U, V\subset X$, which is equivalent to say $\text{Tran} (X, G)\neq \emptyset$, where $\text{Tran} (X, G)$ denotes the set of all \emph{transitive points} in $(X, G)$, that is, $x\in \text{Tran} (X, G)$ if and only if \emph{the orbit of $x$}, $G x= \{g x: g\in G\}$, is dense in $X$.

The following observation is trivial. For completeness we present a proof here.

\begin{lem} \label{1307161017}
Let $(X, G)$ be a transitive $G$-system.
\begin{enumerate}

\item \label{1307171211} If $X$ is perfect, i.e., contains no isolated points, then for each $x\in \text{Tran} (X, G)$, $N (x, U)$ is an infinite set for any non-empty open subset $U\subset X$.

\item \label{1307171210} If $X$ is not perfect then each point of $\text{Tran} (X, G)$ is isolated in $X$.
\end{enumerate}
\end{lem}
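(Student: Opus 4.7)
The plan is to handle the two parts independently, both by short contradiction/direct arguments using that $G$ acts by homeomorphisms and that orbits of transitive points are dense.

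For part (\ref{1307171211}), fix a transitive point $x$ and a non-empty open $U\subset X$, and suppose for contradiction that $N(x,U)=\{g_1,\dots,g_n\}$ is finite. Then $Gx\cap U=\{g_1x,\dots,g_nx\}$ is a finite set. Since $X$ is perfect, every non-empty open subset of $X$ is infinite (any open set is a neighborhood of each of its points, none of which is isolated), so in particular $U$ is infinite; hence I can pick a point $y\in U\setminus\{g_1x,\dots,g_nx\}$. Because the finite set $\{g_1x,\dots,g_nx\}$ is closed in the metric space $X$, the difference $V:=U\setminus\{g_1x,\dots,g_nx\}$ is an open neighborhood of $y$, in particular non-empty. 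Applying density of $Gx$ to $V$ produces $g\in G$ with $gx\in V\subset U$; then $g\in N(x,U)$, so $g=g_i$ for some $i$, yet $g_ix\notin V$ by construction, a contradiction. Hence $N(x,U)$ is infinite.

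For part (\ref{1307171210}), assume $X$ is not perfect, so there exists an isolated point $z\in X$, i.e.\ $\{z\}$ is open in $X$. For any transitive point $x$, density of $Gx$ forces $Gx\cap\{z\}\neq\emptyset$, so $z=gx$ for some $g\in G$, equivalently $x=g^{-1}z$. Since $g^{-1}:X\to X$ is a homeomorphism, the image of the open set $\{z\}$ under $g^{-1}$ is again open; that is, $\{x\}=g^{-1}\{z\}$ is open in $X$, so $x$ is isolated.

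No real obstacle is anticipated: the whole argument rests on the two elementary observations that in a perfect metric space non-empty open sets cannot be finite, and that a homeomorphism sends isolated points to isolated points. Both parts are essentially bookkeeping around the density of the orbit $Gx$.
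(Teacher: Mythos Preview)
Your proof is correct and follows essentially the same approach as the paper: for part (1) you remove the finitely many orbit hits from $U$ and use perfectness plus density of the orbit to find a new hit (the paper phrases this directly as $N(x,U)\setminus F\neq\emptyset$ for every finite $F\subset G$, while you phrase it as a contradiction, but the content is identical), and for part (2) your argument is verbatim the same as the paper's.
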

\begin{proof}
(1) Let $U\subset X$ be a non-empty open subset. For each $F\in \mathcal{F}_G$, as $X$ contains no isolated points, $U\setminus \{f x: f\in F\}$ is also a non-empty open subset, and so there exists $g\in G$ with $g x\in U\setminus \{f x: f\in F\}$, which implies $N (x, U)\setminus F\neq \emptyset$.

(2) This item is also trivial. Let $x_0\in X$ be an isolated point and then let $x\in \text{Tran} (X, G)$. By the definition, it is easy to see $g x= x_0$ for some $g\in G$, equivalently, $x= g^{- 1} x_0$. And then the point $x$ is isolated in $X$.
\end{proof}

Similarly to \cite[Proposition 3.3]{PZstudia}, \cite[Proposition 2.3]{PZstudia}, \cite[Theorem 4.2]{PZETDS}, and \cite[Proposition 4.2]{BlanchardHuang} and \cite[Proposition 5.2]{PZETDS}, respectively, we have the following characterizations of weakly mixing subsets.

\begin{prop} \label{1305142253}
Let $\emptyset\neq K\subset X$ be a closed subset and $m\in \mathbb{N}$.

 \begin{enumerate}

 \item \label{1307171021} If $K$ is a non-trivial weakly mixing subset of order 2 then it is perfect.

 \item \label{1307171022} If $K$ is weakly mixing of order $m$, then $(X_{K, m}, G)$ is a transitive $G$-system, where $X_{K, m}$ is the smallest closed $G$-invariant subset of $X^m$ containing $K^m$.

 \item \label{1307171023} If $K$ is weakly mixing of order 1, then $(X_{K, 1}, G)$ is a transitive $G$-system, moreover, $K\cap \text{Tran} (X_{K, 1}, G)$ is a dense $G_\delta$ subset of $K$.

 \item \label{1307171024} If $K$ is non-trivial then $\eqref{1307140118}\Longleftrightarrow \eqref{1307140119}\Longrightarrow \eqref{1307140120}$, where
\begin{enumerate}

\item \label{1307140118} $K$ is weakly mixing of all orders;

\item \label{1307140119} there exists $B\subset K$, which is the union of countably many Cantor subsets, with properties that $B$ is a dense subset of $K$ and for each $E\subset B$ and any continuous mapping $h:E\rightarrow K$ there exists a sequence $\{g_n: n\in \mathbb{N}\}\subset G$ such that the sequence $\{g_n: n\in \mathbb{N}\}$ (of mappings over $X$) converges to $h$ pointwise on $E$;

\item \label{1307140120} for any given finite family of continuous mappings $f_i: K\rightarrow K, i= 1, \cdots, l$ with $l\in \mathbb{N}$, there exists an increasing sequence of Cantor subsets $C_1\subset C_2\subset \cdots\subset K$ with properties that the union of $\{C_n: n\in \mathbb{N}\}$ is dense in $K$ and for each $i= 1, \cdots, l$ there exists a sequence $\{g_n^{(i)}: n\in \mathbb{N}\}\subset G$ such that the sequence $\{g_n^{(i)}: n\in \mathbb{N}\}$ (of mappings over $X$) converges to $f_i$ uniformly on $C_s$ for any $s\in \mathbb{N}$.
\end{enumerate}
 \end{enumerate}
 \end{prop}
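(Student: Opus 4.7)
For part (1), if the non-trivial weakly mixing set of order $2$, $K$, had an isolated point $x \in K$, then picking $y \in K \setminus \{x\}$ and disjoint open $V_1 \ni x$, $V_2 \ni y$ with $V_1 \cap K = \{x\}$, weak mixing of order $2$ applied to the pairs $(V_1, V_1)$ and $(V_1, V_2)$ would yield $g \in G$ with $\{gx\} = g(V_1 \cap K)$ meeting both $V_1$ and $V_2$, contradicting $V_1 \cap V_2 = \emptyset$. For part (2), given non-empty open $\mathcal{U}, \mathcal{V} \subset X_{K,m}$, density of $\bigcup_g g K^m$ in $X_{K,m}$ yields $g_1, g_2 \in G$ and product boxes $\prod_i U_i$ and $\prod_i V_i$ in $X^m$ (each factor meeting $K$) whose $g_1$- and $g_2$-translates lie inside $\mathcal{U}$ and $\mathcal{V}$; weak mixing of order $m$ then furnishes $h \in \bigcap_i N(U_i \cap K, V_i)$ producing $(u_1, \ldots, u_m) \in K^m \cap \prod_i U_i$ with $h(u_1, \ldots, u_m) \in \prod_i V_i$, so $g_2 h g_1^{-1} \in N(\mathcal{U}, \mathcal{V})$.

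For part (3), specialize (2) to $m=1$ and run a Baire category argument in the compact metric $K$. Fix a countable base $\{U_n\}$ of $X_{K,1}$ and write $U_n = V_n \cap X_{K,1}$; then $\text{Tran}(X_{K,1}, G) = \bigcap_n \bigcup_{g \in G} g^{-1} U_n$ is $G_\delta$, and each $K \cap \bigcup_g g^{-1} U_n$ is dense in $K$: given any open $W \subset X$ meeting $K$, pick $g \in G$ with $K \cap g^{-1} V_n \neq \emptyset$ and apply weak mixing of order $1$ to $W$ and $g^{-1} V_n$ to produce $h \in G$ and $x \in W \cap K$ with $ghx \in V_n$, whence $x \in W \cap K \cap \bigcup_{g' \in G} (g')^{-1} U_n$; the Baire theorem on $K$ concludes.

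The heart of part (4) is the implication (a)$\Rightarrow$(b), a Mycielski/Kuratowski-type recursion that builds $B = \bigcup_k B_k$ one Cantor level at a time. One enumerates countably many finite constraint patterns (disjoint small open subsets of $K$ together with target open subsets of $K$), and at each stage selects nodes in the developing Cantor tree together with a single element of $G$ synchronously shifting them near the prescribed targets; weak mixing of all orders is precisely what allows this synchronous transport of arbitrarily many pairs, and transitivity of $X_{K,m}$ from (2) combined with residuality of transitive points (as in (3)) provides at every stage a dense $G_\delta$ of valid choices of $g \in G$ consistent with the growing tree, so that the limiting Cantor sets collectively witness pointwise convergence to \emph{every} continuous $h$ on \emph{every} subset $E \subset B$. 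By comparison, (b)$\Rightarrow$(a) is routine: since $K$ is perfect by (1), each $U_i \cap B$ is infinite; pick distinct $x_i \in U_i \cap B$ and any $y_i \in V_i \cap K$, feed the finite-set map $x_i \mapsto y_i$ into (b), and the resulting $g_n$ eventually lies in $\bigcap_i N(U_i \cap K, V_i)$. For (b)$\Rightarrow$(c), take an increasing sequence of Cantor sets $C_s \subset B$ with dense union in $K$, apply (b) to $h = f_i|_{C_s}$, and upgrade pointwise to uniform convergence on each $C_s$ via a diagonal argument using the clopen filtration of Cantor sets together with the uniform continuity of $f_i$. The delicate point is coordinating the countable recursion of (a)$\Rightarrow$(b) against the uncountable family of test pairs $(E, h)$.
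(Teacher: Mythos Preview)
The paper does not actually supply a proof of this proposition; it merely states that the four items are proved ``similarly to'' the corresponding results in \cite{PZstudia}, \cite{PZETDS}, and \cite{BlanchardHuang}, leaving the reader to transport those $\mathbb{N}$-action arguments to the group setting. Your sketches for items (1)--(3) and for the direction (b)$\Rightarrow$(a) in item (4) are correct and essentially reproduce those arguments, with one small wrinkle: in (b)$\Rightarrow$(a) you invoke part (1) to conclude that $K$ is perfect, but (1) presupposes weak mixing of order~2, which is exactly what you are trying to establish. This circularity is harmless, since perfectness of $K$ follows directly from hypothesis (b): $B$ is a dense union of Cantor sets, hence has no isolated points, and therefore neither does its closure $K$.

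Your argument for (b)$\Rightarrow$(c), however, has a genuine gap. Fixing Cantor sets $C_s\subset B$ in advance and then applying (b) to $h=f_i|_{C_s}$ yields only \emph{pointwise} convergence $g_n\to f_i$ on $C_s$; your proposed ``upgrade'' via a clopen partition and the uniform continuity of $f_i$ fails because controlling $g$ at one representative point of each clopen piece says nothing about $g$ on the rest of the piece---the action of $G$ on $K$ is not assumed equicontinuous. The route taken in the cited references (and the reason statement (c) permits the sets $C_s$ to depend on the family $\{f_i\}$) is to build the Cantor tree and the approximating elements $g_n^{(i)}$ \emph{simultaneously}: at each stage one first uses weak mixing of the appropriate order to produce a $g$ hitting all the prescribed targets, and only \emph{after} that does one shrink the current nodes of the tree to lie inside the preimages $g^{-1}(\text{target})$, thereby forcing $g$ to act uniformly well on the whole next-stage node. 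This interleaving of the choice of $g$ with the refinement of the tree is precisely what converts the finitary hitting property of weak mixing into uniform approximation on the limiting Cantor sets; it cannot be recovered a posteriori from (b) alone, so the honest route to (c) is (b)$\Rightarrow$(a)$\Rightarrow$(c) with a separate construction for the second arrow.
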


Recall that a well-known result by Furstenberg states that weak mixing of order 2 is equivalent to weak mixing of all orders for actions of an abelian group \cite{Furstenberg}. While, this is no longer valid for actions of non-abelian groups \cite{Glasner05, Weiss}.

By a \emph{Mycielski subset} we mean a subset which is the union of a sequence of Cantor subsets. The following version of the well-known Mycielski Theorem \cite[Theorem 1]{Mycielski} is very useful in the topological construction of Li-Yorke chaos.

\begin{lem} \label{1307161537}
Let $X$ be a perfect compact metric space and $R_n\subset X^{r_n}$ a subset of first category for each $n\in \mathbb{N}$. Then there exists a dense Mycielski set $M$ such that, for each $n\in \mathbb{N}$, $(x_i)_{i= 1}^{r_n}\notin R_n$ whenever $x_1, \cdots, x_{r_n}$ are $r_n$ distinct elements of $M$.
\end{lem}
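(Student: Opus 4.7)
The plan is to run, in parallel, countably many Cantor-scheme constructions --- one anchored inside each member of a countable base of $X$ --- while refining the whole scheme at each finite stage to avoid one more closed nowhere dense slice of some $R_n$ on every tuple of distinct branches. This is the standard Mycielski scheme, with the density of $M$ engineered by anchoring the $j$-th subtree inside the $j$-th basic open set.

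Fix a countable base $\{U_j\}_{j\in \N}$ of nonempty open subsets of $X$, and for each $n$ write $R_n= \bigcup_{k\in \N} F_{n, k}$ with $F_{n, k}$ closed and nowhere dense in $X^{r_n}$. Enumerate the pairs $(n, k)\in \N^2$ as $(n_\ell, k_\ell)_{\ell\in \N}$ so that each pair occurs at infinitely many indices $\ell$. By induction on $\ell\in \N$ I would produce nonempty open sets $V^{(j)}_s\subset X$, for each $j\le \ell$ and each binary string $s$ of length at most $\ell$, satisfying: (a) $V^{(j)}_\emptyset\subset U_j$; (b) $\overline{V^{(j)}_{s0}}$ and $\overline{V^{(j)}_{s1}}$ are disjoint subsets of $V^{(j)}_s$; (c) $\mathrm{diam}\, V^{(j)}_s< 1/ |s|$ whenever $|s|\ge 1$; and the key clause (d): for every list of pairwise distinct labels $(j_1, s_1), \ldots, (j_{r_{n_\ell}}, s_{r_{n_\ell}})$ with $j_p\le \ell$ and $|s_p|= \ell$, the product $\overline{V^{(j_1)}_{s_1}}\times \cdots\times \overline{V^{(j_{r_{n_\ell}})}_{s_{r_{n_\ell}}}}$ is disjoint from $F_{n_\ell, k_\ell}$. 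The inductive step rests on two standard facts: since $X$ is perfect, every nonempty open subset of $X$ contains two nonempty open subsets with disjoint closures of arbitrarily small diameter; and since $F_{n_\ell, k_\ell}$ is closed nowhere dense in $X^{r_{n_\ell}}$, any nonempty product open box in $X^{r_{n_\ell}}$ admits a nonempty product sub-box whose closure misses $F_{n_\ell, k_\ell}$. Only finitely many tuples need attention at stage $\ell$, so finitely many such nowhere-dense shrinkings (processed sequentially) followed by a final split-with-small-diameter inside each refined box suffice.

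Set $C_j:= \bigcap_{\ell\ge j}\, \bigcup_{|s|= \ell} \overline{V^{(j)}_s}$. Each $C_j$ is a Cantor set contained in $U_j$, so $M:= \bigcup_{j\in \N} C_j$ is a dense Mycielski subset of $X$. To verify the required avoidance property, let $x_1, \ldots, x_{r_n}$ be pairwise distinct points of $M$, pick $j_p\in \N$ with $x_p\in C_{j_p}$, and let $\tau_p\in \{0, 1\}^\N$ be the branch with $\{x_p\}= \bigcap_\ell \overline{V^{(j_p)}_{\tau_p|_\ell}}$. Since the $x_p$ are distinct, for all sufficiently large $\ell$ the labels $(j_p, \tau_p|_\ell)$ are pairwise distinct. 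Fix any $k\in \N$ and then choose a stage $\ell\ge \max_p j_p$ at which this distinctness already holds and $(n_\ell, k_\ell)= (n, k)$; such $\ell$ exists by the infinite-occurrence property of the enumeration. Clause (d) at stage $\ell$ then forces $(x_1, \ldots, x_{r_n})\notin F_{n, k}$, and since $k$ is arbitrary, $(x_1, \ldots, x_{r_n})\notin R_n$.

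The main obstacle is the bookkeeping that supports clause (d): at each stage I must first carry out finitely many sequential nowhere-dense shrinkings of the tentative level-$\ell$ leaves \emph{before} committing to a final two-child split inside each refined set, so that (d) survives the subsequent Cantor splitting; and the enumeration of pairs $(n, k)$ must be set up so that every pair is revisited at arbitrarily late stages, which is exactly what lets me choose $\ell$ meeting all three requirements (size $\ell\ge \max_p j_p$, distinctness of labels, and $(n_\ell, k_\ell)= (n, k)$) simultaneously in the verification. Both points are routine once the order of operations within each inductive step is fixed.
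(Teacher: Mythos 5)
The paper does not prove this lemma at all: it quotes it as a known version of Mycielski's theorem, citing \cite[Theorem 1]{Mycielski}. Your argument is the standard Cantor-scheme proof of that theorem and is correct as written --- in particular the two points you isolate (sequentially shrinking the finitely many level-$\ell$ boxes off the closed nowhere dense set before finalizing that level, and enumerating the pairs $(n,k)$ so that each recurs at arbitrarily late stages) are exactly what make the verification go through.
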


In order to prove Theorem \ref{1305150007}, we also need the following useful observation.

\begin{prop} \label{1307161940}
Let $K$ be a non-trivial weakly mixing subset of order $n$ with $n\in \mathbb{N}\setminus \{1\}$. Assume that all $U_1, V_1, \cdots, U_n, V_n$ are open subsets of $X$ intersecting $K$.
Then
$N (U_1\cap K, V_1)\cap \cdots\cap N (U_n\cap K, V_n)$ is an infinite subset of $G$.
\end{prop}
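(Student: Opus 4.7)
The plan is to build by induction an infinite sequence of pairwise distinct $g_1, g_2, \ldots$ inside
$\mathcal{I} := N(U_1 \cap K, V_1) \cap \cdots \cap N(U_n \cap K, V_n)$,
by successively shrinking only the first pair $(U_1, V_1)$ so as to exclude each $g_k$ once it has been exhibited, while keeping the other $n-1$ pairs unchanged. Concretely I maintain, in parallel with the $g_k$'s, descending open sets $U_1 = U_1^{(0)} \supset U_1^{(1)} \supset \cdots$ and $V_1 = V_1^{(0)} \supset V_1^{(1)} \supset \cdots$, all still meeting $K$, such that $g_i \notin N(U_1^{(k)} \cap K, V_1^{(k)})$ for every $i \leq k$. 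Applying the weak mixing of order $n$ hypothesis to the $n$ pairs $(U_1^{(k)}, V_1^{(k)}, U_2, V_2, \ldots, U_n, V_n)$ then produces a new $g_{k+1} \in N(U_1^{(k)} \cap K, V_1^{(k)}) \cap \bigcap_{j \geq 2} N(U_j \cap K, V_j) \subset \mathcal{I}$, which is automatically different from $g_1, \ldots, g_k$.

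The only preparatory fact I need is that $K$ is perfect: since $K$ is non-trivial and $n \geq 2$, $K$ is in particular weakly mixing of order $2$, so Proposition \ref{1305142253}\eqref{1307171021} applies. In particular, every non-empty open subset of $X$ meeting $K$ meets $K$ in infinitely many points. To perform the shrinking step that produces $U_1^{(k+1)}, V_1^{(k+1)}$ from the triple $(g_{k+1}, U_1^{(k)}, V_1^{(k)})$, I fix any $y \in V_1^{(k)} \cap K$ and, using perfectness together with the injectivity of the homeomorphism $g_{k+1}$, pick $x'' \in U_1^{(k)} \cap K$ with $g_{k+1} x'' \neq y$. Setting $r := \rho(g_{k+1} x'', y)/2 > 0$, I define $V_1^{(k+1)} := V_1^{(k)} \cap B(y, r)$ and $U_1^{(k+1)} := U_1^{(k)} \cap g_{k+1}^{-1}\bigl(X \setminus \overline{V_1^{(k+1)}}\bigr)$. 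Both sets are open; $y$ witnesses $V_1^{(k+1)} \cap K \neq \emptyset$, $x''$ witnesses $U_1^{(k+1)} \cap K \neq \emptyset$ (since $\rho(g_{k+1} x'', y) = 2r$ keeps $g_{k+1} x''$ out of $\overline{V_1^{(k+1)}}$), and by construction $g_{k+1}\bigl(U_1^{(k+1)}\bigr) \cap V_1^{(k+1)} = \emptyset$, so $g_{k+1}$ drops out of $N(U_1^{(k+1)} \cap K, V_1^{(k+1)})$ as required.

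The main obstacle is essentially nothing more than keeping $|U_1^{(k)} \cap K| \geq 2$ along the induction, since this is what allows the injective homeomorphism $g_{k+1}$ to produce $x''$ with $g_{k+1} x'' \neq y$. The perfectness of $K$ takes care of this once and for all — and this is exactly where the hypothesis $n \geq 2$ enters, through Proposition \ref{1305142253}\eqref{1307171021}.
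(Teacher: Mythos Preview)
Your argument is correct and gives a genuinely different, more elementary proof than the paper's. The paper passes to $\overline{K}$, invokes Proposition~\ref{1305142253}\eqref{1307171022}--\eqref{1307171023} to realize $\overline{K}^n$ inside the transitive system $(X_{\overline{K},n},G)$ with a dense set of transitive points, shows $X_{\overline{K},n}$ is perfect, and then applies Lemma~\ref{1307161017}\eqref{1307171211} to a single transitive point $(x_1,\dots,x_n)\in\overline{K}^n\cap\prod U_i$ to get infinitely many return times into $\prod V_i$ at once. Your route avoids the product-system machinery entirely: you iterate the weak-mixing hypothesis directly, shrinking just the first pair $(U_1,V_1)$ after each step so as to forbid the element just found. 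This is lighter on prerequisites (only item~\eqref{1307171021} of Proposition~\ref{1305142253} is used) and makes the role of perfectness very transparent. The paper's approach, on the other hand, packages the information more structurally and yields the stronger fact that a \emph{single} point of $\overline{K}^n$ already has an infinite hitting set.

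One small point to tighten: Proposition~\ref{1305142253} is stated for \emph{closed} $K$, so strictly speaking \eqref{1307171021} gives that $\overline{K}$ is perfect, not $K$ itself. This is harmless for your purposes---either observe (as the paper does) that $N(U_i\cap K,V_i)=N(U_i\cap\overline{K},V_i)$ and run your whole induction with $\overline{K}$, or note that density of $K$ in the perfect set $\overline{K}$ forces every open set meeting $K$ to meet it in infinitely many points, which is all you actually use.
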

\begin{proof}
Observe that, for $i= 1, \cdots, n, g\in G$ and $\overline{K}$, the closure of $K$,
$$g\in N (U_i\cap K, V_i)\Longleftrightarrow U_i\cap K\cap g^{- 1} V_i\neq \emptyset\Longleftrightarrow (U_i\cap \overline{K})\cap g^{- 1} V_i\neq \emptyset.$$
Thus we only need to show that $N (U_1\cap \overline{K}, V_1)\cap \cdots\cap N (U_n\cap \overline{K}, V_n)\subset G$ is infinite.

By the assumption one has that $\overline{K}$ is weakly mixing of order $n$, and then: $\overline{K}$ contains no isolated points by Proposition \ref{1305142253} (\ref{1307171021}); and
 the sub-$G$-system $(X_{\overline{K}, n}, G)$ of $(X^n, G)$ generated by $\overline{K}^n$ is a transitive $G$-system by Proposition \ref{1305142253} (\ref{1307171022}) and $\overline{K}^n\cap \text{Tran} (X_{\overline{K}, n}, G)$ is a dense $G_\delta$ subset of $\overline{K}^n$ by Proposition \ref{1305142253} (\ref{1307171023}). Say
 $$(x_1, \cdots, x_n)\in \overline{K}^n\cap \text{Tran} (X_{\overline{K}, n}, G)\cap \prod_{i= 1}^n U_i,$$
 such a tuple $(x_1, \cdots, x_n)$ must exist. As $\overline{K}$ is perfect, then $\overline{K}^n$ is also perfect, and hence
$\overline{K}^n\cap \text{Tran} (X_{\overline{K}, n}, G)$ contains no points which are isolated in $X_{\overline{K}, n}$, thus $X_{\overline{K}, n}$ is perfect by Lemma \ref{1307161017} (\ref{1307171210}). Moreover, applying Lemma \ref{1307161017} (\ref{1307171211}) one has that
$$\bigcap_{i= 1}^n N (U_i\cap \overline{K}, V_i)\supset \bigcap_{i= 1}^n N (x_i, V_i)= N \left((x_1, \cdots, x_n), \prod_{i= 1}^n V_i\right)$$
is an infinite set. This finishes the proof.
\end{proof}

With the help of Proposition \ref{1307161940}, we
can prove the following Proposition \ref{1305150017}, and then Theorem \ref{1305150007}
follows as a direct corollary of Proposition \ref{1305150017}.

\begin{prop} \label{1305150017}
Let $\emptyset\neq K\subset X$ be a non-trivial closed weakly mixing subset of order $n, n\in \mathbb{N}\setminus \{1\}$. Then there exists a dense Mycielski set $M\subset K$ such that, once $x_1, \cdots, x_n\in M$ are distinct and $x_1', \cdots, x_n'\in M$ (need not be distinct), then
\begin{equation} \label{1501110029}
\liminf_{G\ni g\rightarrow \infty} \max_{1\le i\le n} \rho (g x_i, x_i')= 0.
\end{equation}
\end{prop}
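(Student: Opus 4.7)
The plan is to invoke the Mycielski theorem (Lemma \ref{1307161537}) on $K$, which by Proposition \ref{1305142253}(\ref{1307171021}) is a perfect compact metric space. To do so I will manufacture a countable family of closed nowhere-dense subsets of the finite powers $K^r$, whose joint avoidance on tuples of distinct entries from $M$ captures the required condition \eqref{1501110029} for every admissible choice of sources and targets.

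For each integer $r$ with $n\le r\le 2n$, each surjection $\pi:\{1,\ldots,2n\}\to\{1,\ldots,r\}$ whose restriction to $\{1,\ldots,n\}$ is injective, each $F\in\mathcal{F}_G$, and each $m\in\mathbb{N}$, I set
$$R_{\pi,F,m}=\Big\{(y_1,\ldots,y_r)\in K^r:\ \forall g\in G\setminus F,\ \max_{1\le i\le n}\rho(g\, y_{\pi(i)},\, y_{\pi(n+i)})\ge 1/m\Big\}.$$
Continuity of the $G$-action makes $R_{\pi,F,m}$ closed. To see that it has empty interior, take any non-empty basic relatively open box $(V_1\cap K)\times\cdots\times(V_r\cap K)$ in $K^r$ and shrink each $V_j$ to an open set $V_j'\subset V_j$ with $V_j'\cap K\ne\emptyset$ and $\mathrm{diam}(V_j')<1/(2m)$. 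Writing $a_i=\pi(i)$ (distinct for $i=1,\ldots,n$) and $b_i=\pi(n+i)$, Proposition \ref{1307161940} applied to the source sets $V_{a_1}',\ldots,V_{a_n}'$ and target sets $V_{b_1}',\ldots,V_{b_n}'$ produces infinitely many $g\in\bigcap_{i=1}^n N(V_{a_i}'\cap K,\, V_{b_i}')$, so I may pick such a $g$ outside $F$. Choose $y_{a_i}\in V_{a_i}'\cap K$ with $g y_{a_i}\in V_{b_i}'$, and for each $j\notin\{a_1,\ldots,a_n\}$ pick any $y_j\in V_j'\cap K$. In every case $y_{b_i}\in V_{b_i}'$ (either as the previously selected $y_{a_k}$ when $b_i=a_k$, or as a freely chosen element of $V_{b_i}'\cap K$), so $\rho(g\, y_{\pi(i)},\, y_{\pi(n+i)})<1/(2m)<1/m$, placing the resulting point outside $R_{\pi,F,m}$.

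The family $\{R_{\pi,F,m}\}$ is countable, so Lemma \ref{1307161537} provides a dense Mycielski set $M\subset K$ such that every tuple of pairwise distinct elements of $M$, inserted in the coordinates $y_1,\ldots,y_r$, lies outside each $R_{\pi,F,m}$. Given distinct $x_1,\ldots,x_n\in M$ and arbitrary $x_1',\ldots,x_n'\in M$, the sequence $(x_1,\ldots,x_n,x_1',\ldots,x_n')$ realises some $r\in\{n,\ldots,2n\}$ distinct values $y_1,\ldots,y_r$ and an associated surjection $\pi:\{1,\ldots,2n\}\to\{1,\ldots,r\}$, whose restriction to $\{1,\ldots,n\}$ is injective because the $x_i$ are distinct. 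Avoiding $R_{\pi,F,m}$ for every $F\in\mathcal{F}_G$ and every $m\in\mathbb{N}$ translates verbatim into \eqref{1501110029}.

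The main obstacle is that Lemma \ref{1307161537} offers information only about tuples of \emph{distinct} points of $M$, whereas the conclusion must cope with the possibility that some $x_i'$ coincide with each other or with some $x_j$. Enumerating all admissible collision patterns via the surjections $\pi$ circumvents this by turning the single statement into countably many bona fide claims about tuples of pairwise distinct elements, each of which is then handled by a clean application of Proposition \ref{1307161940}.
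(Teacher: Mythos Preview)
Your proof is correct and follows the same strategy as the paper: manufacture countably many first-category subsets of powers of $K$, verify density of their complements via Proposition~\ref{1307161940}, and invoke Lemma~\ref{1307161537}. The only difference is bookkeeping for repeated targets: where you enumerate all collision patterns via surjections $\pi$ so that Mycielski's theorem applies directly, the paper works solely in $K^{2n}$, establishes \eqref{1501110029} first under the extra assumption that all $2n$ points are distinct, and then recovers the general case by a short approximation argument using that the dense Mycielski set $M$ has no isolated points.
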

\begin{proof}
We shall construct a dense Mycielski subset $M\subset K$ such that \eqref{1501110029} holds when all points $x_1, x_1', \cdots, x_n, x_n'\in M$ are distinct. Note that $K$ is a non-trivial closed weakly mixing subset of order $n$, $K$ contains no isolated points by Proposition \ref{1305142253} (\ref{1307171021}), and hence its dense Mycielski subset $M$ contains no isolated points. From which it is easy to check that the constructed subset $M$ satisfies the conclusion.

In $\mathcal{F}_G$ we take a sequence $E_1\subset E_2\subset \cdots$ with union $G$. For each $k\in \mathbb{N}$ we put
$$S_k= \left\{(x_1, \cdots, x_n, x_1', \cdots, x_n')\in K^{2 n}: \inf_{g\in G\setminus E_k} \max_{1\le i\le n} \rho (g x_i, x_i')< \frac{1}{k}\right\}.$$
Obviously, $S_k\subset K^{2 n}$ is an open subset. In fact, $S_k\subset K^{2 n}$ is also dense, as: once $U_1, V_1, \cdots, U_n, V_n$ are non-empty open subsets of $X$ intersecting $K$, by shrinking we could assume additionally that the diameters of all subsets $V_1, \cdots, V_n$ are at most $\frac{1}{2 k}$, then
$$\bigcap_{i= 1}^n N (U_i\cap K, V_i)\setminus E_k\neq \emptyset$$
by Proposition \ref{1307161940}, and so we could choose
$$(x_1, \cdots, x_n, x_1', \cdots, x_n')\in S_k\cap \prod_{i= 1}^n (U_i\cap K)\times \prod_{i= 1}^n (V_i\cap K).$$
Thus $S$ is a dense $G_\delta$ subset of $K^{2 n}$, and for each $(x_1, \cdots, x_n, x_1', \cdots, x_n')\in S$,
$$\liminf_{G\ni g\rightarrow \infty} \max_{1\le i\le n} \rho (g x_i, x_i')= 0,\ \text{where}\ S= \bigcap_{k\in \mathbb{N}} S_k.$$
Now applying Lemma \ref{1307161537} to $K^{2 n}\setminus S$ we obtain a dense Mycielski subset $M\subset K$ such that $(x_1, \cdots, x_n, x_1', \cdots, x_n')\in S$ once all points $x_1, x_1', \cdots, x_n, x_n'\in M$ are distinct. This finishes the proof of the conclusion.
\end{proof}

Similarly, we have the following result, whose proof is left to the reader.

\begin{prop} \label{1307161935}
Let $\emptyset\neq K\subset X$ be a non-trivial closed weakly mixing set of all orders. Then there exists a dense Mycielski set $M\subset K$ such that, for each $n\in \mathbb{N}$, once $x_1, \cdots, x_n\in M$ are distinct and $x_1', \cdots, x_n'\in M$ (need not be distinct), then
$$\liminf_{G\ni g\rightarrow \infty} \max_{1\le i\le n} \rho (g x_i, x_i')= 0.$$
\end{prop}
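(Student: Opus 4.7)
The plan is to follow the proof of Proposition \ref{1305150017} verbatim but with a diagonalization that runs over $n$ as well as $k$, letting a single application of Mycielski's theorem (Lemma \ref{1307161537}) produce one Mycielski set that works for every order at once. Since $K$ is weakly mixing of all orders, it is weakly mixing of order $n$ for every $n\in \mathbb{N}\setminus \{1\}$, so Proposition \ref{1307161940} is at our disposal for each such $n$.

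Concretely, I would first fix an increasing exhaustion $E_1\subset E_2\subset \cdots$ of $G$ by elements of $\mathcal{F}_G$ with union $G$, and then for each pair $(n, k)\in (\mathbb{N}\setminus \{1\})\times \mathbb{N}$ define
\[
S_{n, k}= \left\{(x_1, \cdots, x_n, x_1', \cdots, x_n')\in K^{2 n}: \inf_{g\in G\setminus E_k} \max_{1\le i\le n} \rho (g x_i, x_i')< \frac{1}{k}\right\}.
\]
Each $S_{n, k}$ is open in $K^{2 n}$ by inspection; density in $K^{2 n}$ follows exactly as in the proof of Proposition \ref{1305150017}, namely, given non-empty open sets $U_1, V_1, \dots, U_n, V_n$ of $X$ meeting $K$, shrink the $V_i$ to have diameter at most $\frac{1}{2 k}$ and apply Proposition \ref{1307161940} to produce some $g\in \bigcap_{i= 1}^n N(U_i\cap K, V_i)\setminus E_k$, which in turn supplies a point of $S_{n, k}$ in $\prod_i (U_i\cap K)\times \prod_i (V_i\cap K)$. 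Consequently $S_n= \bigcap_{k\in \mathbb{N}} S_{n, k}$ is a dense $G_\delta$ subset of $K^{2 n}$, and for every tuple in $S_n$ the desired $\liminf$ vanishes.

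Now I would invoke Lemma \ref{1307161537} with $r_n= 2 n$ and $R_n= K^{2 n}\setminus S_n$, which is of first category in $K^{2 n}$; note that $K$ is perfect by Proposition \ref{1305142253} (\ref{1307171021}), so the hypotheses of the Mycielski lemma are satisfied. This yields a single dense Mycielski set $M\subset K$ such that for every $n\in \mathbb{N}\setminus \{1\}$ and every choice of $2 n$ distinct points $y_1, \cdots, y_{2 n}\in M$ the tuple $(y_1, \cdots, y_{2 n})$ lies in $S_n$, so that
\[
\liminf_{G\ni g\rightarrow \infty} \max_{1\le i\le n} \rho (g y_i, y_{n+ i})= 0.
\]
The case $n= 1$ is subsumed in $n= 2$ by ignoring one coordinate.

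The only point that is not an immediate transcription of Proposition \ref{1305150017} is the reduction from the ``all $2 n$ points distinct'' hypothesis (built into Mycielski) to the statement's weaker hypothesis that only $x_1, \dots, x_n$ need be distinct while $x_1', \dots, x_n'$ may repeat. Because $K$ has no isolated points, neither does the dense Mycielski set $M$; so given $x_1, \dots, x_n, x_1', \dots, x_n'\in M$ with the $x_i$ distinct, I would approximate each $x_j'$ by a sequence $y_j'^{(\ell)}\in M$ so that $y_1'^{(\ell)}, \dots, y_n'^{(\ell)}, x_1, \dots, x_n$ are $2 n$ distinct elements of $M$ and $y_j'^{(\ell)}\to x_j'$ as $\ell\to \infty$. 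Applying the conclusion to the distinct tuple and then passing to the limit (using the triangle inequality and the freedom to choose $g$ outside arbitrarily large finite subsets) yields the stated vanishing $\liminf$ for the original tuple. I expect this perturbation step to be the only mildly subtle part of the argument; the rest is bookkeeping parallel to Proposition \ref{1305150017}.
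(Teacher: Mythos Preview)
Your proposal is correct and is precisely the argument the paper has in mind: the paper does not write out a proof of this proposition at all, stating only ``Similarly, we have the following result, whose proof is left to the reader,'' so the intended proof is exactly the diagonalization of Proposition \ref{1305150017} over all $n$ that you describe, with a single invocation of Lemma \ref{1307161537} using the countable family $R_n= K^{2n}\setminus S_n$. Your reduction from the ``all $2n$ points distinct'' case to the stated hypothesis via perturbation in the perfect set $M$ is likewise the same step the paper glosses over in the proof of Proposition \ref{1305150017} with the phrase ``it is easy to check.''
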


We say that $(X, G)$ is \emph{equicontinuous} if for each $\epsilon> 0$ there exists $\delta> 0$ such that $d (x_1, x_2)< \delta$ implies $d (g x_1, g x_2)< \epsilon$ for all $g\in G$. In the following we discuss the local weak mixing in an equicontinuous system.

By a \emph{cover} of $X$ we mean a family of subsets of $X$ with union $X$. If elements of a cover are pairwise disjoint, then it is called a \emph{partition}.
Denote by $\mathcal{C}_X, \mathcal{C}_X^o$ and $\mathcal{P}_X$
the set of all finite Borel covers, finite open covers and finite Borel partitions of $X$, respectively.
Let $\mathcal{V}_1, \mathcal{V}_2\in \mathcal{C}_X$. We say that \emph{$\mathcal{V}_1$ is finer than $\mathcal{V}_2$} if each element of $\mathcal{V}_1$ is contained in some element of $\mathcal{V}_2$, which is denoted by $\mathcal{V}_1\succeq \mathcal{V}_2$.
For $F\in \mathcal{F}_G$ and $\emptyset\neq K\subset X$ we set $(\mathcal{V}_1)_F= \bigvee_{g\in F} g^{- 1} \mathcal{V}_1$, and set $N (\mathcal{V}_1, K)$ to be the minimal cardinality of sub-families of $\mathcal{V}_1$ covering $K$, we set $N (\mathcal{V}_1, \emptyset)= 0$ by convention.

Then similar to \cite[Proposition 2.2]{BlanchardHostMaass} and \cite[Proposition 3.7]{PZstudia} we have:

\begin{prop} \label{1305142353}
$(X, G)$ is equicontinuous if and only if for each $\mathcal{U}\in \mathcal{C}_X^o$ there exists $M< \infty$ such that $N (\mathcal{U}_F, X)< M$ for all $F\in \mathcal{F}_G$. In this case, once $\emptyset\neq K\subset X$ is a weakly mixing subset of order 2 then $K$ must be trivial.
\end{prop}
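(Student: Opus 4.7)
For the forward direction of the first biconditional, my plan is to use a Lebesgue-number argument. Given $\mathcal{U} \in \mathcal{C}_X^o$, pick a Lebesgue number $\epsilon > 0$ for $\mathcal{U}$; then, by equicontinuity, choose $\eta > 0$ so that $\rho(x, y) < \eta$ implies $\rho(gx, gy) < \epsilon$ for all $g \in G$. Cover the compact space $X$ by finitely many open $(\eta/2)$-balls $B_1, \ldots, B_M$. Since any two points in $B_i$ are within $\eta$, the image $g B_i$ has diameter less than $\epsilon$ for every $g \in G$, and so $g B_i$ is contained in some $W_{i,g} \in \mathcal{U}$. Hence $B_i \subset \bigcap_{g \in F} g^{-1} W_{i,g} \in \mathcal{U}_F$ for each $F \in \mathcal{F}_G$, giving $N(\mathcal{U}_F, X) \leq M$ uniformly in $F$.

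For the reverse direction, I would argue by contradiction. Assume $(X, G)$ is not equicontinuous: there exist $\epsilon_0 > 0$, sequences $x_n, y_n \in X$ with $\rho(x_n, y_n) \to 0$, and $g_n \in G$ with $\rho(g_n x_n, g_n y_n) \geq \epsilon_0$. Take $\mathcal{U} \in \mathcal{C}_X^o$ to be a cover of $X$ by open balls of radius less than $\epsilon_0/4$, so every element of $\mathcal{U}$ has diameter less than $\epsilon_0/2$. The key observation is that if $x_n, y_n$ belonged to a common element of $\mathcal{U}_{F_n}$ with $F_n = \{g_1, \ldots, g_n\}$, they would in particular belong to a common element of $g_n^{-1}\mathcal{U}$, forcing $\rho(g_n x_n, g_n y_n) < \epsilon_0/2$. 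Assuming the uniform bound $N(\mathcal{U}_{F_n}, X) < M$ for all $n$, fix minimal subcovers $\{A_1^n, \ldots, A_{k_n}^n\} \subset \mathcal{U}_{F_n}$ with $k_n < M$. Passing to a subsequence, one may assume $x_n, y_n \to z$, that the labels $j_n, j_n' \in \{1, \ldots, M-1\}$ with $x_n \in A_{j_n}^n$, $y_n \in A_{j_n'}^n$ are constant (necessarily $j_n \neq j_n'$), and that the associated labels $V_{j_n, g_i}^n \in \mathcal{U}$ stabilize for each fixed $g_i$ (possible since $\mathcal{U}$ is finite, via a diagonal extraction). This produces two labelings $g \mapsto V_{j, g}, g \mapsto V_{j', g}$ of $G$ into $\mathcal{U}$, and the open sets $\bigcap_{g \in F_n} g^{-1} V_{j, g}$ and $\bigcap_{g \in F_n} g^{-1} V_{j', g}$ both contain $z$ and contain $x_n, y_n$ respectively for large $n$, contradicting that they are distinct elements of the minimal subcover. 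This extraction is the main obstacle, since one must balance the shrinking of the atoms against the finite label set.

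For the second assertion, assume $(X, G)$ is equicontinuous and $K$ is a non-trivial weakly mixing subset of order $2$. Pick $x_1 \neq x_2$ in $K$ and set $d = \rho(x_1, x_2) > 0$. By equicontinuity choose $\delta > 0$ with $\rho(a, b) < \delta$ implying $\rho(ga, gb) < d/3$ for every $g \in G$. Consider the open sets $U_1 = U_2 = B(x_1, \delta/2)$ and $V_1 = B(x_1, d/4)$, $V_2 = B(x_2, d/4)$, each intersecting $K$ (they contain $x_1$ or $x_2$). By weak mixing of order $2$ there is $g \in N(U_1 \cap K, V_1) \cap N(U_2 \cap K, V_2)$, and hence $a_1, a_2 \in U_1 \cap K$ with $g a_1 \in V_1$, $g a_2 \in V_2$. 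The choice of $U_1$ gives $\rho(a_1, a_2) < \delta$, so equicontinuity forces $\rho(g a_1, g a_2) < d/3$; on the other hand, the triangle inequality yields $\rho(g a_1, g a_2) \geq d - d/4 - d/4 = d/2$, a contradiction. Hence $K$ is trivial.
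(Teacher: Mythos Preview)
The paper does not supply a proof of this proposition; it simply records that the statement follows ``similar to \cite[Proposition 2.2]{BlanchardHostMaass} and \cite[Proposition 3.7]{PZstudia}''. So your attempt must be judged on its own merits.

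Your forward direction and your proof of the second assertion are both correct and cleanly written. The Lebesgue-number argument for the forward implication is the standard one, and your contradiction for the weak-mixing part is exactly right: choosing $U_1=U_2$ to be a small ball forces the two preimages $a_1,a_2$ to be $\delta$-close, while the separated targets $V_1,V_2$ force $ga_1,ga_2$ to be far apart, contradicting equicontinuity.

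Your reverse direction, however, has a genuine gap --- and you seem to sense it (``this extraction is the main obstacle''). After the diagonal extraction you obtain, for each fixed $g_i$, a stabilized label $V_{j,g_i}\in\mathcal{U}$; but the sets $A_j^n=\bigcap_{i\le n} g_i^{-1}V_{j,g_i}^n$ in your minimal subcover still carry the \emph{$n$-dependent} labels, and there is no reason all of $V_{j,g_1}^n,\dots,V_{j,g_n}^n$ have stabilized by stage $n$. Hence the stabilized sets $\bigcap_{g\in F_n} g^{-1}V_{j,g}$ are not the elements of your minimal subcover, so no minimality is being contradicted. Moreover, $x_n\to z$ only yields $g_i z\in\overline{V_{j,g_i}}$, not membership in the open set; and even if $z$ did lie in two different members of an open cover, that is no contradiction, since minimal subcovers need not be disjoint.

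One clean way to repair this direction is to pass to the metric $d_G(x,y)=\sup_{g\in G}\rho(gx,gy)$. Equicontinuity is equivalent to $d_G$ inducing the original topology, i.e.\ to compactness of $(X,d_G)$ (the identity $(X,d_G)\to(X,\rho)$ is always continuous; completeness of $(X,d_G)$ follows from that of $(X,\rho)$). Given $\epsilon>0$, let $\mathcal{U}$ be a finite open cover by sets of $\rho$-diameter $<\epsilon$, with $N(\mathcal{U}_F,X)\le M$ for all $F$. For each $F$, the collection of $M$-tuples of closed subsets covering $X$ with $d_F$-diameter $\le\epsilon$ (where $d_F=\max_{g\in F}\rho(g\cdot,g\cdot)$) is a non-empty closed subset of the $M$-fold hyperspace; these sets decrease as $F$ increases, so by compactness their intersection is non-empty. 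Any element of the intersection is a cover of $X$ by $M$ sets of $d_G$-diameter $\le\epsilon$. Thus $(X,d_G)$ is totally bounded, hence compact, hence $(X,G)$ is equicontinuous. Your diagonal extraction is morally aiming at the same limiting cover, but the hyperspace-compactness formulation avoids the bookkeeping pitfalls.
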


\section{Preliminaries for entropy of amenable group actions}

In this section we give some preliminaries of entropy theory for actions of an infinite countable discrete amenable group which will be useful in later discussions.

\emph{From now on, we will assume additionally that $G$ is an infinite countable discrete amenable group.}
Recall that a countable discrete group $G$ is \emph{amenable} if
there exists a sequence $\{F_n: n\in \mathbb{N}\}\subset \mathcal{F}_G$ such that
$$\lim_{n\rightarrow \infty} \frac{|g F_n\Delta F_n|}{|F_n|}= 0$$
for each $g\in G$, which is equivalent to mean that
$$\lim_{n\rightarrow \infty} \frac{|K F_n\cap K F_n^c|}{|F_n|}= 0$$
for each $K\in \mathcal{F}_G$, here we denote by $|\bullet|$ the cardinality of a set $\bullet$. Such a sequence $\{F_n: n\in \mathbb{N}\}$ is called a \emph{F\o lner sequence} of $G$.
In particular,
$$F_n= \prod_{i= 1}^d \{a_{n, i}, a_{n, i}+ 1, \cdots, a_{n, i}+ b_{n, i}- 1\}, n\in \mathbb{N}$$
will define a typical F\o lner sequence for amenable groups $\mathbb{Z}^d$ with $d\in \mathbb{N}$ when
$$\lim_{n\rightarrow \infty} \min_{i= 1}^d b_{n, i}= \infty.$$
The class of countable discrete amenable groups includes all solvable groups.

\emph{We will fix $\{F_n: n\in \mathbb{N}\}\subset \mathcal{F}_G$ to be a F\o lner sequence of the infinite countable discrete amenable group $G$ till the end of the paper.}

Let $\mathcal{U}\in \mathcal{C}_X^o$. Recall that the \emph{topological entropy of $\mathcal{U}$} is defined as
\begin{equation} \label{1307151659}
h_\text{top} (G, \mathcal{U})= \lim_{n\rightarrow \infty} \frac{1}{|F_n|} \log N (\mathcal{U}_{F_n}, X).
\end{equation}
As guaranteed by the well-known Ornstein-Weiss Lemma \cite[Theorem 6.1]{LW}, the limit is well defined and independent of the selection of a F\o lner sequence $\{F_n: n\in \mathbb{N}\}\subset \mathcal{F}_G$. Then the \emph{topological entropy of $(X, G)$} is defined as
\begin{equation} \label{1307172349}
h_\text{top} (G, X)= \sup_{\mathcal{V}\in \mathcal{C}_X^o} h_\text{top} (G, \mathcal{V}).
\end{equation}
Moreover, let $\emptyset\neq K\subset X$, we define the \emph{topological entropy of $K$} as
$$h_\text{top} (G, K)= \sup_{\mathcal{V}\in \mathcal{C}_X^o} \limsup_{n\rightarrow \infty} \frac{1}{|F_n|} \log N (\mathcal{V}_{F_n}, K).$$
And so if $K$ is a finite non-empty set then $h_\text{top} (G, K)= 0$.

Denote by $M (X)$ the set of all Borel probability measures on $X$; by $M (X, G)$ the set of all $G$-invariant elements $\mu$ in $M (X)$, i.e., $\mu (A)= \mu (g^{- 1} A)$ for each $g\in G$ and all $A\in \mathcal{B}_X$, where $\mathcal{B}_X$ is the Borel $\sigma$-algebra of $X$; and by $M^e (X, G)$ the set of all ergodic elements $\nu$ in $M (X, G)$, i.e., for $A\in \mathcal{B}_X$, $A= g A$ for all $g\in G$ implies $\nu (A)= 0$ or $1$.
 Note that $M^e (X, G)\neq \emptyset$ always holds for an amenable group $G$. For $\mu\in M (X)$, denote by $\text{supp} (\mu)$ the \emph{support of $\mu$}, the smallest closed non-empty subset $K\subset X$ with $\mu (K)= 1$. It is basic that if $\mu\in M (X, G)$ then $\text{supp} (\mu)$ is $G$-invariant and hence $(\text{supp} (\mu), G)$ is a $G$-system, and, additionally, if $\mu\in M^e (X, G)$ then the $G$-system $(\text{supp} (\mu), G)$ is transitive and in fact $\mu (\text{Tran} (\text{supp} (\mu), G))= 1$.

Let $\mathcal{A}\subset \mathcal{B}_X$ be a sub-$\sigma$-algebra and $\mu\in M (X), \alpha\in \mathcal{P}_X$. Set
$$H_\mu (\alpha| \mathcal{A})= - \sum_{A\in \alpha} \int_X \mathbb{E}_\mu (1_A| \mathcal{A}) \log \mathbb{E}_\mu (1_A| \mathcal{A}) d \mu,$$
where $\mathbb{E}_\mu (1_A| \mathcal{A})$ denotes the $\mu$-expectation of the function $1_A$ with respect to $\mathcal{A}$; and then for the trivial sub-$\sigma$-algebra $\mathcal{N}= \{\emptyset, X\}$, we set
$$H_\mu (\alpha)= H_\mu (\alpha| \mathcal{N})= - \sum_{A\in \alpha} \mu (A) \log \mu (A).$$
In general, for $\mathcal{U}\in \mathcal{C}_X$, we set without any ambiguity
$$H_\mu (\mathcal{U}| \mathcal{A})= \inf_{\alpha\in \mathcal{P}_X, \alpha\succeq \mathcal{U}} H_\mu (\alpha| \mathcal{A}).$$

Now let $\mu\in M (X, G), \mathcal{A}\subset \mathcal{B}_X$ be a $G$-invariant sub-$\sigma$-algebra (in the sense of $g \mathcal{A}= \mathcal{A}$ mod $\mu$ for each $g\in G$) and $\mathcal{U}\in \mathcal{C}_X$. Recalling from \cite{HYZJFA2011}, we may define the \emph{measure-theoretic $\mu$-entropy of $\mathcal{U}$ relative to $\mathcal{A}$} as
\begin{equation} \label{1307182302}
h_\mu (G, \mathcal{U}| \mathcal{A})= \lim_{n\rightarrow \infty} \frac{1}{|F_n|} H_\mu (\mathcal{U}_{F_n}| \mathcal{A})\ \left(= \inf_{\alpha\in \mathcal{P}_X, \alpha\succeq \mathcal{U}} h_\mu (G, \alpha| \mathcal{A})\right),
\end{equation}
and the \emph{measure-theoretic $\mu$-entropy of $\mathcal{U}$} as
\begin{equation*} \label{1307182303}
h_\mu (G, \mathcal{U})= h_\mu (G, \mathcal{U}| \mathcal{N})= \lim_{n\rightarrow \infty} \frac{1}{|F_n|} H_\mu (\mathcal{U}_{F_n})\ \left(= \inf_{\alpha\in \mathcal{P}_X, \alpha\succeq \mathcal{U}} h_\mu (G, \alpha)\right),
\end{equation*}
respectively, where the latter equalities follow from \cite[Theorem 3.2]{DooleyZhang} and \cite[Theorem 4.14]{HYZJFA2011}, respectively.
Similarly to \eqref{1307151659}, these limits are well defined and independent of the selection of a F\o lner sequence $\{F_n: n\in \mathbb{N}\}\subset \mathcal{F}_G$.
Then the \emph{measure-theoretic $\mu$-entropy of $(X, G)$} is defined as
\begin{equation} \label{1307172351}
h_\mu (G, X)= \sup_{\alpha\in \mathcal{P}_X} h_\mu (G, \alpha)\ \left(= \sup_{\mathcal{U}\in \mathcal{C}_X^o} h_\mu (G, \mathcal{U})\right),
\end{equation}
where the second identity follows from \cite[Theorem 3.5]{HYZJFA2011}.
For each $\mathcal{U}\in \mathcal{C}_X^o$,
one of the main results of \cite{HYZJFA2011}, \cite[Theorem 5.1]{HYZJFA2011}, shows that
$$h_\text{top} (G, \mathcal{U})= \max_{\mu\in M (X, G)} h_\mu (G, \mathcal{U})= \max_{\mu\in M^e (X, G)} h_\mu (G, \mathcal{U}),$$
and hence combined with \eqref{1307172349} and \eqref{1307172351} one has
\begin{equation} \label{1307172352}
h_\text{top} (G, X)= \sup_{\mu\in M (X, G)} h_\mu (G, X)= \sup_{\mu\in M^e (X, G)} h_\mu (G, X).
\end{equation}

For $\mu\in M (X, G)$ and a $G$-invariant sub-$\sigma$-algebra $\mathcal{A}\subset \mathcal{B}_X$, we define
$$P_\mu (G| \mathcal{A})= \{A\in \mathcal{B}_X: h_\mu (G, \{A, X\setminus A\}| \mathcal{A})= 0\}.$$
By the definition, $\mathcal{A}\cup \mathcal{I}_X\subset P_\mu (G| \mathcal{A})$, where $\mathcal{I}_X$ denotes the set of all $G$-invariant Borel subsets of $X$.
It is well known that $P_\mu (G| \mathcal{A})= P_\mu (G| P_\mu (G| \mathcal{A}))$ and $P_\mu (G| \mathcal{A})\subset \mathcal{B}_X$ is a $G$-invariant sub-$\sigma$-algebra containing $\mathcal{A}$, which is called
the \emph{Pinsker $\sigma$-algebra of $\mu$ relative to $\mathcal{A}$}. In particular, we set $P_\mu (G)= P_\mu (G| \mathcal{N})$ (and hence $P_\mu (G)= P_\mu (G| P_\mu (G))$) and call it the \emph{Pinsker $\sigma$-algebra of $\mu$}.
Moreover, for $\alpha\in \mathcal{P}_X$ and $\mathcal{U}\in \mathcal{C}_X$, one has (observing \eqref{1307182302})
\begin{equation} \label{1307182308}
h_\mu (G, \alpha)= h_\mu (G, \alpha| P_\mu (G))\ \text{and hence}\ h_\mu (G, \mathcal{U})= h_\mu (G, \mathcal{U}| P_\mu (G)).
\end{equation}
  Now for each $n\in \mathbb{N}\setminus \{1\}$, following
ideas from \cite{Glasnerisrael, HYZJFA2011}, we could introduce a probability measure $\lambda_{n, \mu}$ over $(X^n, \mathcal{B}_X^n)$ by, for all $A_1, \cdots, A_n\in \mathcal{B}_X$, setting
$$\lambda_{n, \mu} \left(\prod_{i= 1}^n A_i\right)= \int_X \prod_{i= 1}^n \mathbb{E}_\mu (1_{A_i}| P_\mu (G)) d \mu.$$
In fact, observing the $G$-invariance of $P_\mu (G)$ one has $\lambda_{n, \mu}\in M (X^n, G)$.

Then we have the following observation:

\begin{prop} \label{1307172302}
Let $\mu\in M^e (X, G)$ and $n\in \mathbb{N}\setminus \{1\}$. Then
\begin{enumerate}

\item $h_{\lambda_{n, \mu}} (G, X^n)= n h_\mu (G, X)$.

\item $\lambda_{n, \mu}\in M^e (X^n, G)$ (and hence $(\text{supp} (\lambda_{n, \mu}), G)$ is a transitive $G$-system).

\item If $h_\mu (G, X)> 0$ then $\lambda_{n, \mu} (\Delta_n)= 0$, where
$\Delta_n= \{(x_1, \cdots, x_n): x_1= \cdots= x_n\in X\}$.
 Moreover, if let $\mu= \int_X \mu_x d \mu (x)$ be the disintegration of $\mu$ over its Pinsker $\sigma$-algebra $P_\mu (G)$, then: for $\mu$-a.e. $x\in X$, $\mu_x$ is non-atomic (and hence $\text{supp} (\mu_x)$ contains no isolated points) and $\text{supp} (\mu_x)^n\cap \text{Tran} (\text{supp} (\lambda_{n, \mu}), G)$ is a dense subset of $\text{supp} (\mu_x)^n$.
\end{enumerate}
\end{prop}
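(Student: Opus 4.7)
The plan is to exploit the disintegration $\lambda_{n, \mu}= \int_X \mu_x^{\otimes n}\, d\mu (x)$, which follows from the defining formula of $\lambda_{n, \mu}$ upon rewriting $\mathbb{E}_\mu (1_{A_i}| P_\mu (G)) (x)= \mu_x (A_i)$. For (1), introduce the sub-$\sigma$-algebra $\widetilde{P}\subset \mathcal{B}_{X^n}$ generated by $\pi_1^{- 1} P_\mu (G)$, where $\pi_1: X^n\rightarrow X$ is the first coordinate projection. Since $\mu_x (A)= 1_A (x)$ for $\mu$-a.e.\ $x$ whenever $A\in P_\mu (G)$, a direct computation with the defining formula of $\lambda_{n, \mu}$ gives $\pi_i^{- 1} A= \pi_j^{- 1} A$ mod $\lambda_{n, \mu}$ for all $i, j$, so $\widetilde{P}$ is independent of the chosen coordinate. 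As a $G$-invariant sub-$\sigma$-algebra realising $(X, P_\mu (G), \mu, G)$ as a factor via $\pi_1$, $\widetilde{P}$ has zero entropy, whence $\widetilde{P}\subset P_{\lambda_{n, \mu}} (G)$. Combining \eqref{1307182308} with the monotonicity $h (\beta| P_{\lambda_{n, \mu}} (G))\leq h (\beta| \widetilde{P})\leq h (\beta)$ forces $h_{\lambda_{n, \mu}} (G, \beta)= h_{\lambda_{n, \mu}} (G, \beta| \widetilde{P})$ for every $\beta\in \mathcal{P}_{X^n}$. Since the coordinates are conditionally independent under $\widetilde{P}$, the chain rule gives $H_{\lambda_{n, \mu}} (\bigvee_{i= 1}^n \pi_i^{- 1} \alpha_F| \widetilde{P})= n H_\mu (\alpha_F| P_\mu (G))$ for all $\alpha\in \mathcal{P}_X$ and $F\in \mathcal{F}_G$; dividing by $|F|$, passing to the F\o lner limit, and applying \eqref{1307182308} once more yields $n h_\mu (G, \alpha)$. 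Taking supremum over $\alpha$ gives $h_{\lambda_{n, \mu}} (G, X^n)\geq n h_\mu (G, X)$, while the matching upper bound is the subadditivity of entropy over the generating decomposition $\mathcal{B}_{X^n}= \bigvee_i \pi_i^{- 1} \mathcal{B}_X$.

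For (2), if $A\subset X^n$ is Borel and $G$-invariant, then $f:= \mathbb{E}_{\lambda_{n, \mu}} (1_A| \widetilde{P})$ is $G$-invariant and $\widetilde{P}$-measurable, hence corresponds via $\pi_1$ to a $G$-invariant function on $(X, P_\mu (G), \mu)$. Since $\mu$ is ergodic, so is its restriction to $P_\mu (G)$, and therefore $f\equiv c$ for some $c\in [0, 1]$; equivalently $\mu_x^{\otimes n} (A)= c$ for $\mu$-a.e.\ $x$. That $c\in \{0, 1\}$ amounts to the classical statement that the extension $(X, \mu, G)\rightarrow (X, P_\mu (G), \mu, G)$ is relatively weakly mixing, equivalently that the relatively independent $n$-fold self-product $\lambda_{n, \mu}$ over $P_\mu (G)$ is ergodic; this is the amenable-group analogue of the Rokhlin--Sinai theorem worked out in \cite{Glasnerisrael, HYZJFA2011}, which is also the source of the construction of $\lambda_{n, \mu}$. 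The transitivity of $(\text{supp} (\lambda_{n, \mu}), G)$ then follows from the standard fact, recalled in the preliminaries, that an ergodic invariant measure concentrates on a transitive subsystem.

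The heart of (3) is to show that $\mu_x$ is non-atomic for $\mu$-a.e.\ $x$; granted this, Fubini gives $\mu_x^{\otimes n} (\Delta_n)\leq \mu_x^{\otimes 2} (\Delta_2)= \sum_y \mu_x (\{y\})^2= 0$ for $\mu$-a.e.\ $x$, so $\lambda_{n, \mu} (\Delta_n)= 0$; non-atomicity of $\mu_x$ also rules out isolated points in $\text{supp} (\mu_x)$, since any isolated point of a support carries positive mass. To establish non-atomicity, observe that $\phi (x):= \sup_y \mu_x (\{y\})$ is measurable and $G$-invariant (using $\mu_{g x}= g_* \mu_x$), hence constant $\mu$-a.e.\ by ergodicity; if this constant were positive, the $G$-equivariant family of atomic parts of $\mu_x$ would exhibit $(X, \mu, G)$ as a nontrivial discrete-fibre extension of a factor of $(X, P_\mu (G), \mu, G)$, and such discrete extensions of a zero-entropy base have zero entropy, contradicting $h_\mu (G, X)> 0$. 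This amenable-group version of the classical non-atomicity theorem is the main technical obstacle in the proof. Finally, the density claim is a direct consequence of (2): $\text{Tran} (\text{supp} (\lambda_{n, \mu}), G)$ has full $\lambda_{n, \mu}$-measure by ergodicity, and the disintegration $\lambda_{n, \mu}= \int_X \mu_x^{\otimes n}\, d\mu (x)$ forces it to have full $\mu_x^{\otimes n}$-measure for $\mu$-a.e.\ $x$, and a full-measure subset of a measure is dense in its support $\text{supp} (\mu_x^{\otimes n})= \text{supp} (\mu_x)^n$.
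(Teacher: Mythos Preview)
Your treatment of (1) and (2) is in the right spirit. For (1) the paper simply refers to \cite{DouYeZhang}, so your explicit computation via conditional independence over $\widetilde{P}=\pi_1^{-1}P_\mu(G)$ is a reasonable expansion of that. For (2) the paper invokes \cite[Theorem 4]{GlasnerThouvenotWeiss} iteratively to obtain $P_{\lambda_{n,\mu}}(G\mid \pi_1^{-1}P_\mu(G))=\pi_1^{-1}P_\mu(G)$ mod $\lambda_{n,\mu}$; any $G$-invariant Borel set lies in this relative Pinsker algebra, hence equals $\pi_1^{-1}C$ for some $C$, and one checks $C$ is essentially $G$-invariant, so $\mu(C)\in\{0,1\}$ by ergodicity of $\mu$. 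Your appeal to the Pinsker factor being relatively weakly mixing is an equivalent formulation, but note that once you cite that fact outright the preceding reduction to $f\equiv c$ is redundant: the whole content is the cited result, and the precise amenable-group reference is \cite{GlasnerThouvenotWeiss} rather than \cite{Glasnerisrael}.

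The real divergence is in (3), and here your route is more complicated and has a gap. You try to prove non-atomicity of $\mu_x$ first, claiming that if $\sup_y\mu_x(\{y\})=c>0$ a.e.\ then ``the $G$-equivariant family of atomic parts of $\mu_x$'' exhibits $(X,\mu,G)$ as a discrete-fibre extension of a zero-entropy base, forcing $h_\mu(G,X)=0$. But $c>0$ only says each $\mu_x$ has \emph{some} atom; it does not make $\mu_x$ purely atomic with boundedly many atoms, so $(X,\mu,G)$ is not thereby a discrete-fibre extension of anything. One can try to manufacture a finite-to-one factor from the atoms of maximal mass and argue it strictly enlarges $P_\mu(G)$, contradicting maximality of the Pinsker factor, but none of this is carried out and it is considerably more delicate than you suggest. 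The paper's argument reverses your order of deduction and is much shorter: since $\Delta_n$ is $G$-invariant, ergodicity of $\lambda_{n,\mu}$ from (2) forces $\lambda_{n,\mu}(\Delta_n)\in\{0,1\}$; if it were $1$ then for every $A\in\mathcal{B}_X$ one computes
\[
0=\lambda_{n,\mu}\bigl(A\times(X\setminus A)\times X^{n-2}\bigr)=\int_X \mathbb{E}_\mu(1_A\mid P_\mu(G))\bigl(1-\mathbb{E}_\mu(1_A\mid P_\mu(G))\bigr)\,d\mu,
\]
whence $A\in P_\mu(G)$ and $P_\mu(G)=\mathcal{B}_X$, contradicting $h_\mu(G,X)>0$. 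Non-atomicity of $\mu_x$ then drops out of $0=\lambda_{n,\mu}(\Delta_n)=\int(\mu_x^{\otimes n})(\Delta_n)\,d\mu(x)$ together with $(\mu_x^{\otimes n})(\Delta_n)=\sum_y\mu_x(\{y\})^n$. Your final density argument via full $\mu_x^{\otimes n}$-measure of the transitive points matches the paper's.
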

\begin{proof}
(1) The proof is similar to that of
\cite[Lemma 3.1]{DouYeZhang}.

(2) The proof is similar to that of \cite[Lemma 4.3]{HuangXuYi}.
Let $\pi_n: X^n\rightarrow X$ be the projection $(x_1, \cdots, x_n)\mapsto x_1$. By applying $n- 1$ times of \cite[Theorem 4]{GlasnerThouvenotWeiss} we obtain $P_{\lambda_{n, \mu}} (G| \pi_n^{- 1} P_\mu (G))= \pi_n^{- 1} P_\mu (G)$ (in the sense of mod $\lambda_{n, \mu}$), that is, apply \cite[Theorem 4]{GlasnerThouvenotWeiss} to obtain it for $\lambda_{2, \mu}$ and then apply \cite[Theorem 4]{GlasnerThouvenotWeiss} to obtain it for $\lambda_{3, \mu}$, and so on. Now let $A_n\subset X^n$ be any $G$-invariant Borel subset. Then $A_n\in P_{\lambda_{n, \mu}} (G| \pi_n^{- 1} P_\mu (G))$, and hence $A_n= \pi_n^{- 1} (C)$ mod $\lambda_{n, \mu}$ for some $C\in \mathcal{B}_X$. Then, by combining with the $G$-invariance of $A_n$ and $\pi_n \lambda_{n, \mu}= \mu$, we obtain
\begin{eqnarray*}
\mu (g C\Delta C)&= & \lambda_{n, \mu} (\pi_n^{- 1} (g C\Delta C))= \lambda_{n, \mu} (\pi_n^{- 1} (g C)\Delta \pi_n^{- 1} C) \\
&= & \lambda_{n, \mu} (g \pi_n^{- 1} (C)\Delta \pi_n^{- 1} C)= \lambda_{n, \mu} (g A_n\Delta A_n)= 0
\end{eqnarray*}
for each $g\in G$, and hence $\lambda_{n, \mu} (A_n)= \lambda_{n, \mu} (\pi_n^{- 1} C)= \mu (C)$ takes value of either 0 or 1 by the ergodicity of $\mu$. In particular, $\lambda_{n, \mu}\in M^e (X^n, G)$.

(3)
Now assume $h_\mu (G, X)> 0$. First we prove $\lambda_{n, \mu} (\Delta_n)= 0$. Else, by the $G$-invariance of $\Delta_n$ one has $\lambda_{n, \mu} (\Delta_n)= 1$, and hence, for each $A\in \mathcal{B}_X$ and set $A_1= A, A_2= X\setminus A$ and $A_3= \cdots= A_n= X$, one has
\begin{eqnarray*}
0= \lambda_{n, \mu} \left(\prod_{i= 1}^n A_i\right)&= & \int_X \prod_{i= 1}^n \mathbb{E}_\mu (1_{A_i}| P_\mu (G)) d \mu \\
&= & \int_X \mathbb{E}_\mu (1_A| P_\mu (G))\cdot \mathbb{E}_\mu (1_{X\setminus A}| P_\mu (G)) d \mu \\
&= & \int_X \mathbb{E}_\mu (1_A| P_\mu (G))\cdot [1- \mathbb{E}_\mu (1_A| P_\mu (G))] d \mu,
\end{eqnarray*}
and hence $\mathbb{E}_\mu (1_A| P_\mu (G))= 1_A$ mod $\mu$, equivalently, $A\in P_\mu (G)$. Thus, $P_\mu (G)= \mathcal{B}_X$, a contradiction to the assumption $h_\mu (G, X)> 0$.

Observing that by the construction and the assumption, we have
$$\lambda_{n, \mu}= \int_X \mu_x^n d \mu (x)\ \text{and hence}\ \lambda_{n, \mu} (\Delta_n)= \int_X (\mu_x^n) (\Delta_n) d \mu (x),$$
which implies $(\mu_x^n) (\Delta_n)= 0$ and then $\mu_x$ is non-atomic for $\mu$-a.e. $x\in X$. Moreover, by the conclusion of $\lambda_{n, \mu}\in M^e (X^n, G)$ we obtain
$$1= \lambda_{n, \mu} (\text{Tran} (\text{supp} (\lambda_{n, \mu}), G))= \int_X (\mu_x^n) (\text{Tran} (\text{supp} (\lambda_{n, \mu}), G)) d \mu (x),$$
and so, for $\mu$-a.e. $x\in X$, $(\mu_x^n) (\text{Tran} (\text{supp} (\lambda_{n, \mu}), G))= 1$, which implies the density of $\text{supp} (\mu_x)^n\cap \text{Tran} (\text{supp} (\lambda_{n, \mu}), G)$ in $\text{supp} (\mu_x)^n$, as $\text{supp} (\mu_x^n)= \text{supp} (\mu_x)^n$.
\end{proof}

\section{Homoclinic equivalence relation in a shift of finite type\\ for amenable group actions with positive entropy} \label{homoclinic}

Asymptotic limit behaviors in an action with positive topological entropy was observed for a $\mathbb{Z}$-action in \cite{BlanchardHostRuette}, for a shift of finite type of $\mathbb{Z}^d$-actions ($d\in \mathbb{N}$) in \cite{Schmidt95} and for an algebraic action in a series of papers \cite{BowenLi, ChungLi, EinsiedlerSchmidt, KatokSchmidt95, LindSchmidt, LindSchmidtVerbitskiy, SchmidtVerbitskiy}, respectively. In this section we study asymptotic limit behaviors of a shift of finite type with positive topological entropy for a general infinite countable discrete amenable group. The main result of this section is Theorem \ref{1308020015}, which is a direct corollary of Proposition \ref{1106052208} and Proposition \ref{1307262129}. Note that the proofs of Proposition \ref{1106052208} and Proposition \ref{1307262129} are inspired by that of \cite[Proposition 2.1]{Schmidt95}.

First let us introduce a shift of finite type for actions of a general amenable group.
Let $m\in \N\setminus \{1\}$ and fix it in this section. Then $(\{1, \cdots, m\}^G, G)$ is a $G$-system, where $G$ acts naturally over the compact metric space $\{1, \cdots, m\}^G$ by
$$g' (x_g: g\in G)= (x_{g g'}: g\in G).$$
By a \emph{sub-shift} we mean a closed $G$-invariant non-empty subset $X\subset \{1, \cdots, m\}^G$, i.e., $g X= X$ for each $g\in G$. In this case, we will also say that $(X, G)$ is a \emph{sub-shift}.
 For each $F\in \mathcal{F}_G$, denote by $\pi_F: \{1, \cdots, m\}^G\rightarrow \{1, \cdots, m\}^F$ the natural projection.
Then it is easy to check that the topological entropy of a sub-shift $(X, G)$ can be defined equivalently as
\begin{equation} \label{1307151215}
h_\text{top} (G, X)= \lim_{n\rightarrow \infty} \frac{1}{|F_n|} \log |\pi_{F_n} (X)|.
\end{equation}

A sub-shift $X$ is called a \emph{shift of finite type} if
$$X= \{(x_g)_{g\in G}: \pi_F\circ g' (x_g: g\in G)\in X_F\ \text{for each}\ g'\in G\}$$
for some $X_F\subset \{1, \cdots, m\}^F$ with $F\in \mathcal{F}_G$,
equivalently,
$$X= \{(x_g)_{g\in G}: (x_{g g'})_{g\in F}\in X_F\ \text{for each}\ g'\in G\}.$$
As shown by \cite[Proposition 3.12 and Theorem 5.2]{KitchensSchmidt89} (see also \cite[Theorem 3.2]{KatokSchmidt95}), each expansive $\Z^d$-system with $d\in \mathbb{N}$ by continuous automorphisms of a compact zero-dimensional abelian group is topologically conjugate to the shift-action of $\Z^d$ on a shift of finite type.
Here, we mean $(X, G)$ \emph{expansive} if there exists $\delta> 0$ such that, for all different points $x_1$ and $x_2$ in $X$, $\rho (g x_1, g x_2)> \delta$ for some $g\in G$.

The \emph{homoclinic equivalence relation} $H_X$ of a sub-shift $X$ is defined by, for $x= (x_g: g\in G)\in X$ and $x'= (x_g': g\in G)\in X$, $(x, x')\in H_X$ if and only if there exists $F\in \mathcal{F}_G$ such that $x_g= x_g'$ for all $g\in G\setminus F$, equivalently, for each $\epsilon> 0$ there exists $E\in \mathcal{F}_G$ with the property of $\rho (h x, h x')< \epsilon$ for each $h\in G\setminus E$.

For a shift-action of $\Z^d$ on a shift of finite type $X$,
\cite[Proposition 2.1]{Schmidt95} shows that $H_X$ must be \emph{non-trivial} in the sense of $(x, x')\in H_X$ for some $x\neq x'$ if the sub-shift $X$ has positive topological entropy. The following result shows that in fact it holds for a sub-shift of actions of a general amenable group.

\begin{prop} \label{1106052208}
Let $X\subset \{1, \cdots, m\}^G$ be a shift of finite type with positive topological entropy. Then $H_X$ is non-trivial.
\end{prop}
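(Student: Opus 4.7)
The plan is to follow the pigeonhole-plus-gluing strategy of \cite[Proposition 2.1]{Schmidt95}, with the $\mathbb{Z}^d$-lattice combinatorics replaced by the F\o lner property of $G$. Positive topological entropy forces the number of $F_n$-patterns appearing in $X$ to grow like $e^{h|F_n|}$, while the number of possible restrictions to a thin ``boundary'' of $F_n$ grows only subexponentially in $|F_n|$; two distinct $F_n$-patterns must therefore coincide on the boundary, and the shift-of-finite-type rule will then let me splice one into the other to produce a non-trivial homoclinic pair.

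First I would choose a convenient defining window: enlarging it if necessary, fix $F \in \mathcal{F}_G$ with $e_G \in F = F^{-1}$ together with $X_F \subset \{1, \ldots, m\}^F$ so that $X = \{(x_g)_{g \in G} : (x_{g g'})_{g \in F} \in X_F \text{ for every } g' \in G\}$. Set $K = F \cdot F \in \mathcal{F}_G$. For each F\o lner set $F_n$, define the \emph{$K$-interior} $F_n^{\circ} = \{g \in F_n : K g \subset F_n\}$ and the \emph{boundary} $\partial_K F_n = F_n \setminus F_n^{\circ}$; applying the F\o lner condition to $K$ gives $|\partial_K F_n| / |F_n| \to 0$.

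Using $h := h_\text{top}(G, X) > 0$ and formula \eqref{1307151215}, for all $n$ large enough one has $|\pi_{F_n}(X)| > m^{|\partial_K F_n|}$, so the pigeonhole principle applied to the restriction map $\pi_{F_n}(X) \to \{1, \ldots, m\}^{\partial_K F_n}$ yields two distinct patterns $\phi_1, \phi_2 \in \pi_{F_n}(X)$ with $\phi_1|_{\partial_K F_n} = \phi_2|_{\partial_K F_n}$; they must therefore disagree at some point of $F_n^{\circ}$. Fix any $\psi_1 \in X$ extending $\phi_1$, and define $\psi_2 \in \{1, \ldots, m\}^G$ by $\psi_2|_{F_n} = \phi_2$ and $\psi_2|_{G \setminus F_n} = \psi_1|_{G \setminus F_n}$. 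Once I verify $\psi_2 \in X$, the pair $(\psi_1, \psi_2)$ will lie in $H_X$, be non-diagonal, and witness the non-triviality of $H_X$.

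The only delicate step, which I expect to be the main obstacle, is checking that $(\psi_2(g g'))_{g \in F} \in X_F$ for every $g' \in G$. The cases $F g' \subset F_n$ and $F g' \cap F_n = \emptyset$ are immediate (inherited from an $X$-extension of $\phi_2$ and from $\psi_1$, respectively). In the remaining overlap case, for any $h g' \in F g' \cap F_n$ the existence of some $g_0 \in F$ with $g_0 g' \notin F_n$ gives $g_0 g' = (g_0 h^{-1})(h g') \in K \cdot (h g')$, since $g_0 h^{-1} \in F F^{-1} = K$; hence $h g' \in \partial_K F_n$. Because $\phi_1$ and $\phi_2$ agree on $\partial_K F_n$, the $F$-window of $\psi_2$ at $g'$ coincides with that of $\psi_1$ and therefore lies in $X_F$. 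Precisely this case is what dictates the choice $K = F F^{-1}$ when defining the boundary; everything else is routine bookkeeping.
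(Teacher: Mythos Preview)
Your proof is correct and follows essentially the same approach as the paper: both use the SFT rule to splice two configurations that agree on the boundary $F_n\cap FF^{-1}F_n^c$, whose relative size tends to zero by the F\o lner property. The paper packages this as a proof by contradiction (triviality of $H_X$ forces the boundary restriction to determine the full $F_n$-pattern, whence $|\pi_{F_n}(X)|\le m^{|F_n\cap FF^{-1}F_n^c|}$ and the entropy vanishes), while you argue directly via the pigeonhole principle; the splicing verification is identical in both.
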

\begin{proof}
By the assumption, we let $X_F\subset \{1, \cdots, m\}^F$ with $F\in \mathcal{F}_G$ such that
$$X= \{(x_g)_{g\in G}: (x_{g g'})_{g\in F}\in X_F\ \text{for each}\ g'\in G\}.$$

Assume the contrary that $H_X$ is trivial, i.e., for $x, x'\in X$, $(x, x')\in H_X$ if and only if $x= x'$.
Then for all $x, x'\in X$ and each $E\in \mathcal{F}_G$, say $x= (x_g: g\in G)$ and $x'= (x_g': g\in G)$, once $x_g= x'_g$ for all $g\in E\cap F F^{- 1} E^c$, we have that $x$ agrees with $x'$ over $E$ (which will be proved later), which implies that $\pi_E (X)= \pi_{E\cap F F^{- 1} E^c} (X)$ for each $E\in \mathcal{F}_G$. Thus, according to \eqref{1307151215}, we have
\begin{eqnarray*}
h_\text{top} (G, X)&= & \lim_{n\rightarrow \infty} \frac{1}{|F_n|} \log |\pi_{F_n} (X)|= \lim_{n\rightarrow \infty} \frac{1}{|F_n|} \log |\pi_{F_n\cap F F^{- 1} F_n^c} (X)| \\
&\le & \log m\cdot \limsup_{n\rightarrow \infty} \frac{1}{|F_n|} |F_n\cap F F^{- 1} F_n^c|= 0,
\end{eqnarray*}
a contradiction to the assumption that $(X, G)$ has positive topological entropy.

 Thus it remains to show that, for any given $x, x'\in X$ and $E_1\in \mathcal{F}_G$ (fix them), say $x= (x_g: g\in G)$ and $x'= (x_g': g\in G)$, if $x_g= x'_g$ for all $g\in E_1\cap F F^{- 1} E_1^c$, then $x$ agrees with $x'$ over the whole $E_1$.
Assume the contrary that $x$ agrees with $x'$ over $E_1\cap F F^{- 1} E_1^c$ but not over $E_1$. We may choose $y\in \{1, \cdots, m\}^G\setminus \{x\}$ with $y_g= x'_g$ for each $g\in E_1$ and $y_g= x_g$ for each $g\in E_1^c$.
 Recalling $x, x'\in X$, one has:
  \begin{enumerate}

  \item For any $g'\notin F^{- 1} E_1^c$, $F g'\subset E_1$ and so $(y_{g g'})_{g\in F}= (x'_{g g'})_{g\in F}\in X_F$.

  \item For any $g'\in F^{- 1} E_1^c$ and let $g\in F$: if $g g'\in E_1$ then $g g'\in E_1\cap F F^{- 1} E_1^c$ and so $y_{g g'}= x'_{g g'}= x_{g g'}$ (observe that $x$ agrees with $x'$ over $E_1\cap F F^{- 1} E_1^c$), and if $g g'\notin E_1$ then $y_{g g'}= x_{g g'}$. Summing up, $(y_{g g'})_{g\in F}= (x_{g g'})_{g\in F}\in X_F$ for each $g'\in F^{- 1} E_1^c$.
  \end{enumerate}
This implies $y\in X$, and so $(x, y)\in H_X$ by the construction, a contradiction to the assumption that $H_X$ is trivial.
This finishes the proof.
\end{proof}

The converse of Proposition \ref{1106052208} holds true when the action is good enough and the infinite amenable group $G$ is residually finite as shown by Proposition \ref{1307262129}.

Recall that a countable group is \emph{residually finite} if the intersection of all its normal subgroups of finite indexes is the trivial group $\{e_G\}$, where $e_G$ is the unit of the group $G$. Each finitely generated nilpotent group is residually finite. Let $x\in X$. It is easy to see that $G_x\doteq \{g\in X: g x= x\}$ is a subgroup of $G$. We say that $x\in X$ is \emph{periodic} if $G x\subset X$ is a finite subset, which is equivalent to say that the subgroup $G_x$ has a finite index with respect to $G$.

As shown by \cite[Theorem 1.3]{BowenLi}, \cite[Theorem 5.7]{DeningerSchmidtetds}, \cite[Theorem 1.1]{LindSchmidtVerbitskiy} and \cite[Theorem 7.1]{LindSchmidtWard}, the concept of periodic points is quite related to the topological entropy for many algebraic actions of a residually finite group. And the connection between periodic points, topological entropy and homoclinic equivalence relation for a shift-action of $\Z^d$ on a shift of finite type was explored in \cite{Schmidt95}. Observe that, even only considering a shift-action of $\Z^d$ for a general $d> 1$, a shift of finite type may not contain any periodic point \cite{Ber, Rob}, and this potential absence of periodic points is associated with some of the difficulties one encounters when dealing with shifts of finite type in higher dimensions as shown in \cite{Schmidt95}.

Now we can provide the converse of Proposition \ref{1106052208} in some sense as follows.

\begin{prop} \label{1307262129}
Let $X\subset \{1, \cdots, m\}^G$ be a shift of finite type with zero topological entropy, where $G$ is an infinite countable residually finite amenable group. Assume that $X$ contains a dense set of periodic points. Then $H_X$ is trivial.
\end{prop}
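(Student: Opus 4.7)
The plan is to argue the contrapositive: assume $H_X$ is non-trivial and derive $h_\text{top}(G, X) > 0$. Pick $x \neq x'$ in $X$ with $(x, x') \in H_X$, set $D = \{g \in G : x_g \neq x'_g\}$ (finite and non-empty), and fix $g_0 \in D$. Enlarge $D$ to some $E_1 \in \mathcal{F}_G$ with $D \subseteq E_1$ and the ``buffer'' property $x_g = x'_g$ for every $g \in E_1 \cap FF^{-1}E_1^c$; any $E_1 \supseteq FF^{-1}D$ works by the same boundary analysis used in the proof of Proposition \ref{1106052208}. By density of periodic points, choose a periodic $p \in X$ with $p_g = x_g$ for all $g \in FF^{-1}E_1$. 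Its stabilizer $G_p$ has finite index in $G$; the kernel of the permutation representation $G \to \mathrm{Sym}(G/G_p)$ is a finite-index normal subgroup $N \triangleleft G$ with $N \subseteq G_p$, so $p_{gn} = p_g$ for all $g \in G$ and $n \in N$.

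Set $K_0 = E_1^{-1}FF^{-1}E_1 \in \mathcal{F}_G$. For each finite $K_0$-separated $S \subseteq N$ (meaning $n_1 n_2^{-1} \notin K_0$ whenever $n_1 \neq n_2$ in $S$), define
\[
  y^{(S)}_g = \begin{cases} x'_{g n^{-1}}, & \text{if } g \in E_1 n \text{ for some (necessarily unique) } n \in S, \\ p_g, & \text{otherwise.} \end{cases}
\]
I claim $y^{(S)} \in X$, which is checked via the SFT condition $(y^{(S)}_{gg'})_{g \in F} \in X_F$ at each $g' \in G$. The $K_0$-separation of $S$ ensures that a window $Fg'$ meets at most one region $E_1 n$ with $n \in S$. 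If it meets none, the window reads $p|_{Fg'} \in X_F$. If it meets exactly one such $E_1 n$, then $gg'n^{-1} \in FF^{-1}E_1$ for every $g \in F$, and, combining the $N$-periodicity of $p$, the equality $p = x$ on $FF^{-1}E_1$, the buffer property of $E_1$, and $x = x'$ off $D \subseteq E_1$, one computes $y^{(S)}_{gg'} = x'_{gg'n^{-1}}$ in every subcase; hence the window reads $x'|_{Fg'n^{-1}} \in X_F$. This is precisely the blending argument of Proposition \ref{1106052208}, applied in parallel at each $n \in S$.

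To count, observe that distinct subsets of a $K_0$-separated set $L \subseteq N$ yield distinct $y^{(S)}$: for each $n \in L$ one has $y^{(S)}_{g_0 n} = x'_{g_0}$ when $n \in S$, and $y^{(S)}_{g_0 n} = p_{g_0 n} = p_{g_0} = x_{g_0}$ when $n \in L \setminus S$ (the $K_0$-separation of $L$ prevents $g_0 n$ from lying in $E_1 n'$ for any $n' \in S$ distinct from $n$), while $x_{g_0} \neq x'_{g_0}$. Now use the Følner sequence $\{F_k\}$. A standard averaging over coset representatives of $N$ gives $|F_k \cap N|/|F_k| \to 1/[G:N]$, and a greedy $K_0$-packing inside $F_k \cap N$ produces a $K_0$-separated $L_k \subseteq F_k \cap N$ with $|L_k| \ge |F_k \cap N|/|K_0| \ge c|F_k|$ for some fixed $c > 0$ and all large $k$. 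The $2^{|L_k|}$ configurations $\{y^{(S)} : S \subseteq L_k\} \subseteq X$ are pairwise distinguished on $g_0 L_k$, so their restrictions to $F_k' := F_k \cup g_0 F_k \cup E_1 F_k$ are pairwise distinct. Since $|F_k'|/|F_k| \to 1$,
\[
  h_\text{top}(G, X) \ge \liminf_{k \to \infty} \frac{\log |\pi_{F_k'}(X)|}{|F_k'|} \ge c \log 2 > 0,
\]
contradicting the hypothesis $h_\text{top}(G, X) = 0$.

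The principal technical hurdle is the SFT verification in the ``straddling'' subcase above: one must show that a window meeting the boundary of a single modification region $E_1 n$ still reads a coherent $x'$-pattern, via the chain $p_{gg'} = p_{gg'n^{-1}} = x_{gg'n^{-1}} = x'_{gg'n^{-1}}$, each link consuming one of the carefully prepared ingredients ($N$-periodicity of $p$, $p = x$ on $FF^{-1}E_1$, the buffer property of $E_1$, and $x = x'$ outside $D$). The Følner-density fact $|F_k \cap N|/|F_k| \to 1/[G:N]$ for finite-index subgroups is standard but is the step where the amenability hypothesis enters the counting.
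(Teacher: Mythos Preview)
Your proof is correct and follows a genuinely different, more elementary route than the paper's.  The paper leans on Weiss's tiling theorem (via \cite{CortezPetiteggd}, \cite{DeningerSchmidtetds}, \cite{W0}): it produces a special F\o lner sequence $\{F_n\}$ together with normal subgroups $G_n\lhd G$ of finite index such that $\{F_n g:g\in G_n\}$ tiles $G$, then plants copies of the defect on tiles indexed by $Q\cap G_M$ (where $Q$ is the stabilizer of the chosen periodic point) and counts via the tiling structure.  You bypass the tiling theorem entirely: take the normal core $N\lhd G$ of the stabilizer $G_p$, separate translates combinatorially by the $K_0$-ball condition, and count by a greedy packing together with the elementary F\o lner density $|F_k\cap N|/|F_k|\to [G:N]^{-1}$.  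The technical heart---the blending verification that a window straddling the boundary of a single modification region reads a coherent $x'$-pattern---is the same in both arguments; the difference lies in how separation of the defect sites and the final counting are organized.

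A notable byproduct of your approach: residual finiteness of $G$ is never used.  Once a periodic point $p$ agreeing with $x$ on $FF^{-1}E_1$ is supplied by the density hypothesis, its stabilizer has finite index and its normal core furnishes the required periodic background; amenability alone handles the entropy estimate.  In the paper's proof, residual finiteness enters only through the invocation of the tiling F\o lner sequence, so your argument in effect shows that this hypothesis can be dropped from Proposition~\ref{1307262129}.
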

\begin{proof}
By the assumption, we let $X_F\subset \{1, \cdots, m\}^F$ with $F\in \mathcal{F}_G$ such that
$$X= \{(x_g)_{g\in G}: (x_{g g'})_{g\in F}\in X_F\ \text{for each}\ g'\in G\}.$$
Now we assume that $H_X$ is non-trivial.
In the following we shall prove the conclusion by showing that in this case $(X, G)$ has positive topological entropy.

As $G$ is an infinite countable residually finite amenable group, by applying \cite[Lemma 5]{CortezPetiteggd} which is due to B. Weiss (see also \cite[Corollary 5.6]{DeningerSchmidtetds} or \cite[Theorem 1]{W0}), we could choose a sequence $\{G_n: n\in \mathbb{N}\}$ of normal subgroups of $G$ with finite indexes
and
 a F\o lner sequence $\{F_n: n\in \mathbb{N}\}$ of $G$ such that
 \begin{enumerate}

 \item
 $\{G_n: n\in \mathbb{N}\}$ decreases to the trivial group $\{e_G\}$,

 \item
$\{F_n g: g\in G_n\}$ forms a partition of $G$ for each $n\in \mathbb{N}$, and

 \item the sequence $e_G\in F_1\subset F_2\subset \cdots$ increases to the whole group $G$.
 \end{enumerate}
Observe that $H_X$ is non-trivial,
in $X$ we could choose different points $x= (x_g: g\in G)$ and $x'= (x_g': g\in G)$ and $N\in \mathbb{N}$ such that $x_g= x_g'$ for all $g\in G\setminus F_N$ (and hence $\pi_{F_N} (x)\neq \pi_{F_N} (x')$). As the sequence $F_1\subset F_2\subset \cdots$ increases to $G$, there exists $M\ge N$ with $F F^{- 1} F_N\subset F_M$ (and hence $\pi_{F_M} (x)\neq \pi_{F_M} (x')$). And finally, because that $X$ contains a dense set of periodic points, we could select a periodic point $y= (y_g: g\in G)\in X$ with $\pi_{F_M} (y)= \pi_{F_M} (x)$. Observe that $Q\doteq \{g\in G: g y= y\}$ is a subgroup of $G$ with a finite index (as $y$ is a periodic point).

As $\{F_M g: g\in G_M\}$ forms a partition of $G$, for each $z= (z_s: s\in Q\cap G_M)\in \{0, 1\}^{Q\cap G_M}$, there exists uniquely a point $x^z= (x^z_g: g\in G)\in \{1, \cdots, m\}^G$ with
\begin{eqnarray*}
\pi_{F_M} (s x^z)= & \pi_{F_M} (x)&\ \text{for each}\ s\in Q\cap G_M\ \text{with}\ z_s= 0,\\
\pi_{F_M} (s x^z)= & \pi_{F_M} (x')&\ \text{for each}\ s\in Q\cap G_M\ \text{with}\ z_s= 1,\\
x^z_g= & y_g&\ \text{for each}\ g\in G\setminus F_M (Q\cap G_M).
\end{eqnarray*}
In fact, $x^z_g= y_g$ for each $g\in G\setminus F_N (Q\cap G_M)$: by the above construction we only need to check it for each $g\in F_M (Q\cap G_M)\setminus F_N (Q\cap G_M)$, say $g= l r$ with $r\in Q\cap G_M$ and $l\in F_M$ (and hence $l\notin F_N$),
$$x^z_g= x^z_{l r}= (r x^z)_l= x_l\ \text{or}\ x'_l,$$
depending on $z_r= 0$ or 1, thus (observing $x'_l= x_l= y_l$ by $l\in F_M\setminus F_N$)
 $$x^z_g= y_l= (r y)_l\ (\text{as $r\in Q$})= y_{l r}= y_g.$$
Thus the constructed point $x^z$ belongs to the shift $X$:
\begin{itemize}

\item if $g'\in F^{- 1} F_N (Q\cap G_M)$, say $g'= f t$ with $t\in Q\cap G_M$ and $f\in F^{- 1} F_N$, then $F f\subset F_M$ by the construction and hence
     \begin{equation*}
     (x^z_{g g'})_{g\in F}= (x^z_{g f t})_{g\in F}= ((t x^z)_{g f})_{g\in F}= (x_{g f})_{g\in F}\ \text{or}\ (x'_{g f})_{g\in F},
     \end{equation*}
     depending on $z_t= 0$ or 1, which implies $(x^z_{g g'})_{g\in F}\in X_F$ because $x, x'\in X$;

\item if $g'\notin F^{- 1} F_N (Q\cap G_M)$ then $g g'\notin F_N (Q\cap G_M)$ for each $g\in F$ and so $(x^z_{g g'})_{g\in F}= (y_{g g'})_{g\in F}\in X_F$
by the above constructions (observing $y\in X$).
\end{itemize}

Now we aim to estimate the topological entropy of $(X, G)$ as follows.

Firstly, observe that $G_M$ is a normal subgroup of $G$ with a finite index and $Q$ is a subgroup of $G$ with a finite index. It is basic to see that $Q\cap G_M$ is a normal subgroup of $Q$, $G_M$ is a normal subgroup of $G_M Q$ (observing $G_M Q$ is a subgroup of $G$) and the quotient groups $(G_M Q)/G_M$ and $Q/ (Q\cap G_M)$ are isomorphic. Thus $Q\cap G_M$ is a subgroup of $Q$ with a finite index, and hence a subgroup of $G$ with a finite index, which implies that $F_L (Q\cap G_M)= G$ for some $L\ge M$.

Now fix each $K\ge M$. On one hand, for any given $z= (z_s: s\in Q\cap G_M)\in \{0, 1\}^{Q\cap G_M}$ and $z'= (z_s': s\in Q\cap G_M)\in \{0, 1\}^{Q\cap G_M}$, if $z_\xi\neq z'_\xi$ for some $\xi\in Q\cap G_M$ with $F_L \xi\subset F_K$, say $z_\xi= 0$ and $z'_\xi= 1$, then
$$\pi_{F_M} (\xi x^z)= \pi_{F_M} (x)\neq \pi_{F_M} (x')= \pi_{F_M} (\xi x^{z'})\ (\text{observing $\pi_{F_M} (x)\neq \pi_{F_M} (x')$}),$$
which implies $\pi_{F_K} (\xi x^z)\neq \pi_{F_K} (\xi x^{z'})$ (observing $K\ge M$), that is, $\pi_{F_K} (\xi x^z)$ and $\pi_{F_K} (\xi x^{z'})$ are two different elements of $\pi_{F_K} (X)$. In particular,
\begin{equation} \label{1312130000}
|\pi_{F_K} (X)|\ge 2^{|\{g\in Q\cap G_M: F_L g\subset F_K\}|}.
\end{equation}
On the other hand, as $F_L (Q\cap G_M)= G$, it is easy to see
\begin{equation} \label{1307312107}
|\{g\in Q\cap G_M: F_L g\cap F_K\neq \emptyset\}|\ge \frac{|F_K|}{|F_L|},
\end{equation}
and hence
\begin{eqnarray} \label{1307312019}
|\{g\in Q\cap G_M: F_L g\subset F_K\}|&\ge & |\{g\in Q\cap G_M: F_L g\cap F_K\neq \emptyset\}|-\nonumber \\
& & |\{g\in G: F_L g\cap F_K\neq \emptyset\ \text{and}\ F_L g\cap F_K^c\neq \emptyset\}|\nonumber \\
&\ge & \frac{|F_K|}{|F_L|}- |F_L^{- 1} F_K\cap F_L^{- 1} F_K^c|\ (\text{applying \eqref{1307312107}}).
\end{eqnarray}

Combining \eqref{1312130000} and \eqref{1307312019} with \eqref{1307151215} we obtain
\begin{eqnarray*}
h_\text{top} (G, X)&= & \lim_{K\rightarrow \infty} \frac{1}{|F_K|} \log |\pi_{F_K} (X)| \\
&\ge & \log 2\cdot \liminf_{K\rightarrow \infty} \frac{1}{|F_K|} |\{g\in Q\cap G_M: F_L g\subset F_K\}|\ge \frac{\log 2}{|F_L|}> 0
\end{eqnarray*}
which finishes the proof,
where the last inequality follows from the estimation \eqref{1307312019} and the fact that $\{F_n: n\in \mathbb{N}\}$ is a F\o lner sequence of $G$.
\end{proof}

\section{Weak mixing in amenable group actions with positive entropy}

The relationship between positive topological entropy and (local) weak mixing has been explored in \cite{BlanchardHuang, PZETDS, PZstudia, PZtopoappli} for a $\Z$-system. It is natural to ask if these results can be generalized to a $G$-system for a general amenable group. The answers turn out to be affirmative.
For example, \cite[Theorem 7.8]{HYZJFA2011} tells us that $X$ is weakly mixing of all orders if $(X, G)$ has uniformly positive entropy (see \cite{HYZJFA2011} for the detailed definition of uniform positive entropy for actions of an infinite countable discrete amenable group).
In this section first we aim to prove Theorem \ref{wmamenable} that (local) weak mixing exists indeed in such an action with positive topological entropy.

Before proceeding,
we need the following basic facts.

\begin{lem} \label{1307181004}
Let $(X, G)$ be a transitive $G$-system and $\emptyset\neq K\subset X$. Assume that $K\cap \text{Tran} (X, G)$ is a dense subset of $K$. Then $K$ is weakly mixing of order 1.
\end{lem}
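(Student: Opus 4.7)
The plan is to verify the defining condition of weak mixing of order $1$ directly, using only the assumption that transitive points are dense in $K$. Let $U, V \subset X$ be arbitrary open subsets both intersecting $K$; the goal is to produce some $g \in G$ with $g(U \cap K) \cap V \neq \emptyset$, i.e., to show $N(U \cap K, V) \neq \emptyset$.

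The first step is to extract a transitive point sitting inside $U \cap K$. Since $U \cap K$ is a non-empty relatively open subset of $K$ and $K \cap \text{Tran}(X, G)$ is dense in $K$ by hypothesis, the intersection $U \cap K \cap \text{Tran}(X, G)$ is non-empty; pick any point $x$ in it.

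The second step is to exploit the transitivity of $x$. Because $Gx$ is dense in $X$ and $V$ is a non-empty open subset of $X$, there exists $g \in G$ with $gx \in V$. Combined with $x \in U \cap K$, this gives $gx \in g(U \cap K) \cap V$, so that $g \in N(U \cap K, V)$, as desired.

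I do not expect any real obstacle here — the lemma is essentially an unpacking of the definition of weak mixing of order $1$ together with the definition of a transitive point, and the whole argument fits into a single short paragraph. The only point worth emphasizing in the write-up is that $U \cap K \neq \emptyset$ is an open condition in the relative topology of $K$, so that density of $K \cap \text{Tran}(X, G)$ in $K$ really does force a transitive point inside $U \cap K$.
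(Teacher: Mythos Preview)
Your proof is correct. The paper states this lemma as one of the ``basic facts'' without supplying a proof, so your direct verification from the definitions --- pick a transitive point in $U\cap K$ and push it into $V$ --- is exactly the intended (and only natural) argument.
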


\begin{lem} \label{1307182256}
Let $\mu\in M (X)$ and $\mathcal{U}\in \mathcal{C}_X$. Then $H_\mu (\mathcal{U})\le \log N (\mathcal{U}, \text{supp} (\mu))$.
\end{lem}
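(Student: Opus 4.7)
The plan is to realize the infimum $H_\mu(\mathcal{U}) = \inf_{\alpha \succeq \mathcal{U}} H_\mu(\alpha)$ by exhibiting an explicit partition $\alpha \in \mathcal{P}_X$ refining $\mathcal{U}$ whose entropy is at most $\log N(\mathcal{U}, \text{supp}(\mu))$, and then invoke the standard fact that $-\sum_{i=1}^N p_i \log p_i \le \log N$ for any probability vector $(p_1, \ldots, p_N)$.

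Concretely, set $N = N(\mathcal{U}, \text{supp}(\mu))$ and pick a sub-family $\{U_1, \ldots, U_N\} \subset \mathcal{U}$ which covers $\text{supp}(\mu)$. Enumerate $\mathcal{U}$ as $\{V_1, \ldots, V_k\}$, and form the ordered list $W_1 = U_1, \ldots, W_N = U_N, W_{N+1} = V_1, \ldots, W_{N+k} = V_k$. Disjointify by setting
\[
A_i = W_i \setminus \bigcup_{j < i} W_j, \qquad i = 1, \ldots, N+k.
\]
Because $\{V_1, \ldots, V_k\}$ covers $X$, the collection $\alpha = \{A_1, \ldots, A_{N+k}\}$ is a finite Borel partition of $X$, and by construction $A_i \subset W_i \in \mathcal{U}$, so $\alpha \succeq \mathcal{U}$.

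Now observe that for $i \ge N+1$ we have $A_i \subset X \setminus \bigcup_{j=1}^N U_j$, and this complement misses $\text{supp}(\mu)$; hence $\mu(A_i) = 0$ for $i > N$. Using the convention $0 \log 0 = 0$, the entropy reduces to
\[
H_\mu(\alpha) = -\sum_{i=1}^{N} \mu(A_i) \log \mu(A_i).
\]
Since the $A_i$ for $i \le N$ are pairwise disjoint and their union contains $\text{supp}(\mu)$, the vector $(\mu(A_1), \ldots, \mu(A_N))$ is a probability distribution on $N$ points, so by concavity of $-x \log x$ (Jensen's inequality) we get $H_\mu(\alpha) \le \log N$. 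Combining this with the definition of $H_\mu(\mathcal{U})$ yields
\[
H_\mu(\mathcal{U}) \le H_\mu(\alpha) \le \log N = \log N(\mathcal{U}, \text{supp}(\mu)),
\]
as desired.

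There is no real obstacle here; the only point that requires a little care is ensuring that the constructed $\alpha$ genuinely refines $\mathcal{U}$ (which forces us to append the remaining elements of $\mathcal{U}$ to cover $X \setminus \bigcup_{i=1}^N U_i$) while at the same time not contributing to the entropy (which is automatic since those leftover pieces sit outside $\text{supp}(\mu)$).
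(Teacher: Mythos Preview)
Your argument is correct. The paper itself states this lemma as a ``basic fact'' and gives no proof, so there is nothing to compare; your disjointification of a minimal subcover of $\text{supp}(\mu)$ (padded with the remaining elements of $\mathcal{U}$ to fill out $X$) is exactly the standard elementary justification one would supply, and every step checks out.
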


Then Theorem \ref{wmamenable} comes from \eqref{1307172352} and the following result.

\begin{prop} \label{1307181026}
Let $\mu\in M^e (X, G)$ with $h_\mu (G, X)> 0$, and say $\mu= \int_X \mu_x d \mu (x)$ to be the disintegration of $\mu$ over its Pinsker $\sigma$-algebra $P_\mu (G)$.
Then $\text{supp} (\mu_x)$ is a non-trivial weakly mixing subset of all orders for $\mu$-a.e. $x\in X$, and
\begin{equation} \label{1307182208}
h_\mu (G, X)\le \text{ess}-\sup_\mu\ h_\text{top} (G, \text{supp} (\mu_x))\ \text{with}\ \text{ess}-\sup_\mu= \inf_{A\in \mathcal{B}_X, \mu (A)= 1} \sup_{x\in A}.
\end{equation}
\end{prop}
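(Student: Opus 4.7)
My plan separates into two more-or-less independent parts: the weakly mixing conclusion is harvested from Proposition \ref{1307172302}(2)--(3) essentially for free, while the entropy inequality \eqref{1307182208} requires combining the Pinsker identity \eqref{1307182308} with Lemma \ref{1307182256} applied fibrewise along the disintegration. The principal obstacle lies in the second part: one must build a single Borel partition of $X$ that simultaneously witnesses a minimal sub-cover of $\text{supp} (\mu_x)$ for $\mu$-a.e.\ $x$, and do so $P_\mu (G)$-measurably.

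For the weakly mixing statement, I would intersect the countably many $\mu$-conull sets produced by Proposition \ref{1307172302}(3) over $n\in \N\setminus \{1\}$ to obtain a $\mu$-conull $X_0\subset X$ such that, for every $x\in X_0$ and every such $n$, $\mu_x$ is non-atomic and $\text{supp} (\mu_x)^n\cap \text{Tran} (\text{supp} (\lambda_{n, \mu}), G)$ is dense in $\text{supp} (\mu_x)^n$. Fix $x\in X_0$ and put $K= \text{supp} (\mu_x)$. Non-atomicity of $\mu_x$ makes $K$ perfect and in particular non-trivial. To verify weak mixing of order $n\ge 2$, pick open sets $U_1, V_1, \ldots, U_n, V_n$ each meeting $K$ and select $(y_1, \ldots, y_n)\in K^n\cap \text{Tran} (\text{supp} (\lambda_{n, \mu}), G)\cap U_1\times \cdots\times U_n$. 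Since $V_1\times \cdots\times V_n$ meets $K^n\subset \text{supp} (\lambda_{n, \mu})$, its intersection with $\text{supp} (\lambda_{n, \mu})$ is open and non-empty, so the dense orbit of $(y_1, \ldots, y_n)$ yields $g\in G$ with $g y_i\in V_i$ for every $i$; then $g\in \bigcap_{i=1}^n N (U_i\cap K, V_i)$. Hence $K$ is weakly mixing of all orders for every $x\in X_0$.

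For \eqref{1307182208}, by \eqref{1307172351} it is enough to prove $h_\mu (G, \mathcal{U})\le \int h_\text{top} (G, \text{supp} (\mu_x))\, d\mu (x)$ for each $\mathcal{U}\in \mathcal{C}_X^o$. Applying \eqref{1307182308}, the task reduces to the key estimate
$$H_\mu (\mathcal{U}_{F_n}| P_\mu (G))\le \int \log N (\mathcal{U}_{F_n}, \text{supp} (\mu_x))\, d\mu (x).$$
To prove it I would enumerate all sub-families $\mathcal{S}_1, \ldots, \mathcal{S}_M$ of $\mathcal{U}_{F_n}$ in order of non-decreasing cardinality, and let $j (x)$ be the first index with $\text{supp} (\mu_x)\subset \bigcup \mathcal{S}_{j (x)}$, so that $|\mathcal{S}_{j (x)}|= N (\mathcal{U}_{F_n}, \text{supp} (\mu_x))$. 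The level sets $E_j= \{x: j (x)= j\}$ are $P_\mu (G)$-measurable because each condition $\text{supp} (\mu_x)\subset \bigcup \mathcal{S}$ reads $\mu_x (X\setminus \bigcup \mathcal{S})= 0$, which factors through $P_\mu (G)$. Order each $\mathcal{S}_j$ and form the canonical partition $\alpha_j\succeq \mathcal{S}_j$ of $X$ with $|\mathcal{S}_j|$ cells by the successive-difference recipe from the proof of Lemma \ref{1307182256}. The Borel partition $\beta= \{E_j\cap A: 1\le j\le M,\ A\in \alpha_j\}$ then refines $\mathcal{U}_{F_n}$, and for $x\in E_j$ the disintegration gives $\mu_x (E_{j'})= \delta_{j, j'}$, so only the at most $|\mathcal{S}_j|$ atoms of $\beta$ coming from $\alpha_j$ carry positive $\mu_x$-mass. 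This yields $H_{\mu_x} (\beta)\le \log N (\mathcal{U}_{F_n}, \text{supp} (\mu_x))$, and integrating against $\mu$ produces the displayed bound.

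To conclude, divide by $|F_n|$ and let $n\to \infty$. The integrands $\frac{1}{|F_n|} \log N (\mathcal{U}_{F_n}, \text{supp} (\mu_x))$ are dominated by $\log |\mathcal{U}|$, so the reverse Fatou lemma gives
$$h_\mu (G, \mathcal{U})\le \int \limsup_n \frac{1}{|F_n|} \log N (\mathcal{U}_{F_n}, \text{supp} (\mu_x))\, d\mu (x)\le \int h_\text{top} (G, \text{supp} (\mu_x))\, d\mu (x),$$
which is at most $\text{ess}-\sup_\mu h_\text{top} (G, \text{supp} (\mu_x))$. Taking the supremum over $\mathcal{U}\in \mathcal{C}_X^o$ finishes the proof. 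As flagged above, the main obstacle is the construction of $\beta$: the minimal sub-cover of $\text{supp} (\mu_x)$ depends on $x$, so a naive pointwise choice would only give the wrong-way inequality $\inf \int\ge \int \inf$; the $P_\mu (G)$-measurable selection through the finite lattice of sub-families of $\mathcal{U}_{F_n}$ is what lets the per-$x$ canonical partitions be glued into one global refinement without losing the bound.
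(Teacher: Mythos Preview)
Your proof is correct and follows the paper's overall strategy: harvest Proposition \ref{1307172302} for the weak mixing conclusion, then combine the Pinsker identity \eqref{1307182308} with a fibrewise application of Lemma \ref{1307182256} and reverse Fatou for the entropy bound. For the first part the paper routes through Lemma \ref{1307181004} while you verify the hitting-time condition by hand, but this is the same argument.

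The one substantive difference is in how the key inequality $H_\mu(\mathcal{U}_{F_n}\mid P_\mu(G))\le \int_X \log N(\mathcal{U}_{F_n},\text{supp}(\mu_x))\,d\mu(x)$ is obtained. The paper simply invokes the identity $H_\mu(\mathcal{U}_{F_n}\mid P_\mu(G))=\int_X H_{\mu_x}(\mathcal{U}_{F_n})\,d\mu(x)$ for covers, citing \cite[Lemma 3.13]{DooleyZhang}, and then applies Lemma \ref{1307182256} pointwise. You instead supply the measurable-selection argument explicitly: enumerating the sub-families of $\mathcal{U}_{F_n}$, stratifying $X$ by the $P_\mu(G)$-measurable sets $E_j$, and gluing the canonical partitions $\alpha_j$ into a single $\beta\succeq\mathcal{U}_{F_n}$. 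This is essentially a proof of (the direction needed from) the cited lemma, so your argument is more self-contained; as a bonus it yields the slightly sharper intermediate estimate $h_\mu(G,X)\le \int_X h_\text{top}(G,\text{supp}(\mu_x))\,d\mu(x)$ before you pass to the essential supremum. One minor point you glossed over: since $\bigcup\mathcal{S}_j$ need not equal $X$, the partition $\beta$ only covers $X$ up to a $\mu$-null set (as $\mu_x(X\setminus\bigcup\mathcal{S}_{j(x)})=0$ and the $E_j$ are $P_\mu(G)$-measurable), but this is harmless for the entropy computation.
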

\begin{proof}
Applying Proposition \ref{1307172302} we know that:
for $\mu$-a.e. $x\in X$, $\mu_x$ is non-atomic (and hence $\text{supp} (\mu_x)$ is non-trivial), and, for each $n\in \mathbb{N}$, $\text{supp} (\mu_x)^n\cap \text{Tran} (\text{supp} (\lambda_{n, \mu}), G)$ is a dense subset of $\text{supp} (\mu_x)^n$ (in particular, $\text{supp} (\mu_x)^n\subset \text{supp} (\lambda_{n, \mu})$) and hence $\text{supp} (\mu_x)^n$ is weakly mixing of order 1 by Lemma \ref{1307181004}, which is equivalent to say that $\text{supp} (\mu_x)$ is weakly mixing of order $n$. This implies that $\text{supp} (\mu_x)$ is a non-trivial weakly mixing subset of all orders for $\mu$-a.e. $x\in X$.

Now let $\mathcal{U}\in \mathcal{C}_X^o$, first we aim to prove
\begin{equation} \label{1307182228}
h_\mu (G, \mathcal{U})\le \text{ess}-\sup_\mu\ \limsup_{n\rightarrow \infty} \frac{1}{|F_n|} \log N (\mathcal{U}_{F_n}, \text{supp} (\mu_x)).
\end{equation}
In fact, with the help of Lemma \ref{1307182256}, it is easy to obtain \eqref{1307182228} as follows:
\begin{eqnarray} \label{1307182328}
h_\mu (G, \mathcal{U})&= & h_\mu (G, \mathcal{U}| P_\mu (G))\ (\text{using \eqref{1307182308}})\nonumber \\
&= & \lim_{n\rightarrow \infty} \frac{1}{|F_n|} H_\mu (\mathcal{U}_{F_n}| P_\mu (G))=  \lim_{n\rightarrow \infty} \frac{1}{|F_n|} \int_X H_{\mu_x} (\mathcal{U}_{F_n}) d \mu (x) \\
&\le & \int_X \limsup_{n\rightarrow \infty} \frac{1}{|F_n|} H_{\mu_x} (\mathcal{U}_{F_n}) d \mu (x)\le \text{ess}-\sup_\mu\ \limsup_{n\rightarrow \infty} \frac{1}{|F_n|} H_{\mu_x} (\mathcal{U}_{F_n})\nonumber \\
&\le & \text{ess}-\sup_\mu\ \limsup_{n\rightarrow \infty} \frac{1}{|F_n|} \log N (\mathcal{U}_{F_n}, \text{supp} (\mu_x)),\nonumber
\end{eqnarray}
where the second identity of \eqref{1307182328} is well known (see for example \cite[Lemma 3.13]{DooleyZhang}).
Now we take a sequence $\{\mathcal{U}_n: n\in \mathbb{N}\}\subset \mathcal{C}_X^o$ with the diameters of $\mathcal{U}_n$ tending to 0.
Then with help of \eqref{1307172351}, \eqref{1307182208} follows by firstly applying \eqref{1307182228} to each $\mathcal{U}_n$ and then taking the limit as $n$ tends to infinity. This finishes the proof.
\end{proof}

As shown by \cite{HYZJFA2011} and the above arguments, the proofs of \cite[Theorem 7.8]{HYZJFA2011} and Theorem \ref{wmamenable} both use ideas from ergodic theory. Recently, using mainly combinatorial
and topological methods, in \cite{KerrLisoficindependence} Kerr and Li undertook a local analysis of combinatorial independence which connects to topological entropy within the framework of actions of sofic groups. At the end of this section, we shall give some comments about its relationship with the topic of the present paper using the languages of \cite{KerrLisoficindependence}. See \cite{KerrLisoficindependence} for a detailed introduction of related discussions.


When considering a $G$-system $(X, G)$ where $G$ is an infinite countable discrete sofic group, Kerr and Li proved that positive (sofic) topological entropy also implies Li-Yorke chaos \cite[Corollary 8.4]{KerrLisoficindependence}, and then the system $(X, G)$ has at most zero (sofic) topological entropy if the space $X$ contains at most countably many points.
Note that \cite[Corollary 8.4]{KerrLisoficindependence} (and \cite[Theorem 8.1]{KerrLisoficindependence}) are stated for the sofic topological entropy defined using ultrafilter. However, the limsup versions of them follow directly from the ultrafilter versions, since one can pass to a subsequence where the quantity converges and then one can use any free ultrafilter on this subsequence.

Due to lack of space, the definition of sofic group is omitted here. See \cite{KerrLisoficindependence} for the detailed definition of soficity for a group and the introduction of entropy theory for actions of countable discrete sofic groups (see also \cite{BowenJAMS, KerrLiinvention} for the entropy theory for actions of countable discrete sofic groups).

Generalizations of \cite[Theorem 7.8]{HYZJFA2011} and Theorem \ref{wmamenable} to actions of a more general infinite countable discrete sofic group were proved in fact implicitly in \cite{KerrLisoficindependence}.


\begin{prop} \label{1307151842}
Assume that $G$ is an infinite countable discrete sofic group and $(X, G)$ has uniformly positive entropy (with respect to some fixed sofic approximation sequence $\Sigma$), i.e., $IE^\text{sof}_2 (X, G)= IE_2^\Sigma (X, G)= X^2$ in the language of \cite[Definition 4.3 and Remark 4.4]{KerrLisoficindependence}. Then $X$ is a weakly mixing set of all orders.
\end{prop}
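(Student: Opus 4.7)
The plan is to reduce the proof to two ingredients from Kerr--Li's theory of IE-tuples in the sofic framework: the extension of uniform positive entropy from pairs to tuples of arbitrary length, and a counting argument on independence sets. First I would invoke from \cite{KerrLisoficindependence} the fact that $IE_2^\Sigma(X,G)=X^2$ propagates to $IE_n^\Sigma(X,G)=X^n$ for every $n\ge 2$. Recall that $(x_1,\ldots,x_n)\in IE_n^\Sigma$ iff every product neighborhood $U_1\times\cdots\times U_n$ of $(x_1,\ldots,x_n)$ admits an independence set $J\subset G$ of positive density in the sofic sense relative to $\Sigma$; once the pairwise condition holds for every pair of points of $X$, Kerr--Li's combinatorial amplification (in the spirit of Karpovsky--Milman and Sauer--Shelah type estimates) produces positive-density independence sets for tuples of arbitrary length.

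Next, given arbitrary nonempty open subsets $U_1,V_1,\ldots,U_n,V_n$ of $X$, I would pick any $x_i\in U_i$ and $y_i\in V_i$. By Step~1, $(x_1,y_1,\ldots,x_n,y_n)\in IE_{2n}^\Sigma(X,G)$, so the $2n$-tuple $(U_1,V_1,\ldots,U_n,V_n)$ admits a positive-density independence set $J\subset G$. Since this density survives averaging (the positive density in the sofic approximation pulls back to positive upper Banach density on $G$ via the approximately equivariant maps from $\Sigma$), a standard double-counting of pairs $(g_1,g_2)\in J\times J$ via the identity $\sum_{h\in G}|J\cap h^{-1}J|=|J|^2$ yields some $h\in G$ with $|J\cap h^{-1}J|$ arbitrarily large; in particular one can find $n$ distinct elements $g_1,\ldots,g_n\in J$ with $hg_1,\ldots,hg_n\in J$ and all $2n$ elements distinct.

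Now I would apply the independence definition to the coloring $\omega\colon J\to\{1,\ldots,2n\}$ sending $g_i\mapsto 2i-1$ and $hg_i\mapsto 2i$ (extended arbitrarily elsewhere), where $A_{2i-1}=U_i$ and $A_{2i}=V_i$. This produces $x\in X$ with $g_ix\in U_i$ and $hg_ix\in V_i$ for every $i$. Writing $u_i=g_ix\in U_i$, one gets $hu_i\in V_i$, so $h\in\bigcap_{i=1}^n N(U_i,V_i)$, which is exactly weak mixing of order $n$ for $X$. As $n$ was arbitrary, $X$ is a weakly mixing subset of all orders.

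The main obstacle will be Step~1: carefully formulating and verifying the amplification from $IE_2=X^2$ to $IE_n=X^n$ in the sofic setting. In \cite{KerrLisoficindependence} combinatorial independence sets nominally live inside the sofic approximation $\{1,\ldots,d_i\}$ rather than in $G$ itself, so one has to transfer positive density from the sofic approximation back to $G$ (via the approximately equivariant maps furnished by $\Sigma$) before the counting argument above can be applied. An alternative, and probably cleaner, route is to work throughout with independence sets in $G$ directly, which is the viewpoint taken elsewhere in \cite{KerrLisoficindependence} and which makes the counting step immediate.
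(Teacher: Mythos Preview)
Your proposal has a genuine gap at exactly the place you flagged: the amplification $IE_2^\Sigma(X,G)=X^2 \Rightarrow IE_n^\Sigma(X,G)=X^n$ (or its orbit analogue) is not a result recorded in \cite{KerrLisoficindependence}, and the Sauer--Shelah/Karpovsky--Milman machinery there does not furnish it. Whether uniform positive entropy of order $2$ forces uniform positive entropy of all orders is a delicate question already for $\mathbb{Z}$-actions, and there is no reason to expect a soft combinatorial argument to deliver it for general sofic $G$. Your counting step is also shaky as written: the identity $\sum_{h\in G}|J\cap h^{-1}J|=|J|^2$ holds only for finite $J$, and for an infinite positive-density $J$ in a possibly non-amenable group there is no direct pigeonhole giving a single $h$ with $|J\cap h^{-1}J|\ge n$; you would need a Furstenberg-type recurrence argument, which again leans on amenability.

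The paper sidesteps both issues by never amplifying to higher-order IE on $X$. Instead it immediately passes from $IE_2^\Sigma(X,G)=X^2$ to the orbit version $IE_2(X,G)=X^2$ via \cite[Proposition 4.6]{KerrLisoficindependence}, and then uses the \emph{product formula} for orbit IE-pairs \cite[Theorem 3.3]{KerrLisoficindependence} to get, by induction,
\[
IE_2(X^m,G)=IE_2(X^{m-1},G)\times IE_2(X,G)=X^{2(m-1)}\times X^2=X^{2m}
\]
for every $m$. Since $G$ is infinite, any pair of nonempty open sets $U,V\subset X^m$ then admits an independence set with at least two elements $s\neq t$, and $ts^{-1}\in N(U,V)$; thus $(X^m,G)$ is transitive, i.e.\ $X$ is weakly mixing of order $m$. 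The key economy is working with $IE_2$ of the product $X^m$ rather than $IE_{2m}$ of $X$: the product formula is a clean structural fact, whereas the amplification you attempt is genuinely hard.

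Your ``alternative route'' at the end is pointing in the right direction (orbit IE rather than sofic IE), but you would still need to replace the amplification to $IE_{2n}(X,G)$ by the product formula for $IE_2(X^n,G)$ to close the argument.
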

\begin{proof}
We shall prove the conclusion by showing that $(X^m, G)$ is transitive for each $m\in \mathbb{N}$ by induction over $m\in \mathbb{N}$. Let $m\in \mathbb{N}$.
In fact, recalling that $G$ is infinite and observing \cite[Definition 3.2]{KerrLisoficindependence}, it suffices to prove that $IE_2 (X^m, G)= X^{2 m}$.
First we consider the case of $m= 1$. Observing
$X^2= IE^\Sigma_2 (X, G)= IE^\text{sof}_2 (X, G)$
by the assumption, $IE_2 (X, G)= X^2$ follows from \cite[Proposition 4.6]{KerrLisoficindependence}.
Now we assume that $IE_2 (X^k, G)= X^{2 k}$ for some $k\in \mathbb{N}$, we consider the case of
$m= k+ 1$. In fact, it follows directly from \cite[Theorem 3.3]{KerrLisoficindependence} that
$$IE_2 (X^{k+ 1}, G)= IE_2 (X^k, G)\times IE_2 (X, G)= X^{2 k}\times X^2= X^{2 (k+ 1)}.$$
This shows that $IE_2 (X^m, G)= X^{2 m}$ for each $m\in \mathbb{N}$, which finishes the proof.
\end{proof}

\begin{thm} \label{1307151844}
Assume that $G$ is an infinite countable discrete sofic group and $(X, G)$ has positive (sofic) topological entropy (with respect to some sofic approximation sequence). Then it admits a non-trivial weakly mixing subset of all orders.
\end{thm}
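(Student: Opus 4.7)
The plan is to deduce this theorem from Kerr-Li's sofic combinatorial independence theory in \cite{KerrLisoficindependence}, in which the result is implicit. I would first extract from positive sofic entropy a substantial closed IE-set and then show that such a set is weakly mixing of all orders. The starting point is Kerr-Li's result that positive sofic topological entropy with respect to a sofic approximation sequence $\Sigma$ guarantees non-diagonal pairs in $IE_2^\Sigma(X, G)$. Applying their Kuratowski--Mycielski-type inductive construction, the same one used to prove Li--Yorke chaos in \cite[Corollary 8.4]{KerrLisoficindependence}, I would obtain a non-trivial closed (in fact Cantor) subset $K \subseteq X$ such that every finite tuple from $K$ lies in $IE_k^\Sigma(X, G) \subseteq IE_k(X, G)$, with the inclusion coming from \cite[Proposition 4.6]{KerrLisoficindependence}.

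Next, I would show that any such (sofic) IE-set $K$ is weakly mixing of all orders. Fix $n \in \mathbb{N}$ and open subsets $U_1, V_1, \ldots, U_n, V_n \subseteq X$ each meeting $K$. Choose $x_i \in U_i \cap K$ and $x_i' \in V_i \cap K$; by the IE-set property, $(x_1, x_1', \ldots, x_n, x_n') \in K^{2n}$ is an IE-tuple, so the tuple of neighborhoods $(U_1, V_1, \ldots, U_n, V_n)$ admits a (positive-density) independence set $J \subseteq G$. Picking distinct $j_1, j_2 \in J$ and, for each $i$, the pattern $\sigma_i : J \to \{1, \ldots, 2n\}$ sending $j_1 \mapsto 2i - 1$ and $j_2 \mapsto 2i$, independence yields a point $y_i \in X$ with $j_1 y_i \in U_i$ and $j_2 y_i \in V_i$; setting $g = j_2 j_1^{-1}$ then gives $g \in \bigcap_{i=1}^n N(U_i, V_i)$.

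The main obstacle is promoting this to $g \in \bigcap_{i=1}^n N(U_i \cap K, V_i)$, i.e., ensuring each witness $j_1 y_i$ can be taken in $K$. In Kerr-Li's setup this refinement is built into the inductive construction of the IE-set $K$ (the nested intersection of neighborhoods is chosen so that independence witnesses are kept inside $K$), and equivalently can be phrased as transitivity of $(X_{K, n}, G)$ with transitive points dense in $K^n$, in the spirit of Proposition \ref{1305142253}\,(\ref{1307171022}). Alternatively, one can bypass this step entirely by working at the level of the product systems and invoking the product formula for IE-tuples from \cite[Theorem 3.3]{KerrLisoficindependence}, which yields $IE_2((X_{K,n})^2, G) = (X_{K,n})^4$ via an inductive argument mimicking the proof of Proposition \ref{1307151842}; this then delivers weak mixing of order $n$ for $K$ directly, and letting $n$ range over all of $\mathbb{N}$ completes the proof.
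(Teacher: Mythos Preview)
Your overall strategy---pulling a Cantor set out of Kerr--Li's machinery---is the right one, but you extract too little from \cite{KerrLisoficindependence} and then run into an obstacle that the stronger conclusion would have avoided. The paper's proof invokes \cite[Proposition 4.16 and Theorem 8.1]{KerrLisoficindependence} to produce a Cantor set $K\subset X$ satisfying the proximality-type condition
\[
\liminf_{G\ni g\to\infty}\ \max_{1\le i\le m}\rho(g x_i,x_i')=0
\]
for every $m$, all distinct $x_1,\dots,x_m\in K$, and all $x_1',\dots,x_m'\in K$. This is strictly stronger than the bare IE-set property you extract, and it makes weak mixing of all orders immediate: given open $U_i,V_i$ meeting $K$, choose distinct $x_i\in U_i\cap K$ and $x_i'\in V_i\cap K$, and the liminf condition hands you $g$ with $g x_i\in V_i$ for every $i$. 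The witnesses $x_i$ are already in $K$ by construction, so $g\in\bigcap_i N(U_i\cap K,V_i)$ with no further work.

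By contrast, the obstacle you flag is genuine for your weaker hypothesis, and neither of your proposed fixes closes it. Fix (a) asserts that the Kuratowski--Mycielski construction keeps the independence witnesses inside $K$, but nothing in the IE-set property forces this; you would need to re-enter the proof of \cite[Theorem 8.1]{KerrLisoficindependence} and recover exactly the proximality statement the paper quotes. Fix (b) is circular: to run the product-formula induction of Proposition~\ref{1307151842} you would need $IE_2(X_{K,n},G)=X_{K,n}^2$, but the inclusion $K^{2n}\subset IE_{2n}(X,G)$ concerns IE-tuples in the ambient system $(X,G)$, and there is no reason these descend to IE-tuples of the subsystem $(X_{K,n},G)$---the independence witnesses $y$ live in $X$, not a priori in $X_{K,n}$. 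The clean route is simply to quote the full conclusion of \cite[Theorem 8.1]{KerrLisoficindependence} from the outset.
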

\begin{proof}
Applying \cite[Proposition 4.16 and Theorem 8.1]{KerrLisoficindependence} to the assumption, there exists a Cantor subset $K\subset X$ (and hence non-trivial) such that: for all $m\in \mathbb{N}$, once $x_1, \cdots, x_m\in K$ are distinct and $x_1', \cdots, x_m'\in K$ (need not be distinct), then
\begin{equation} \label{1308151532}
\liminf_{G\ni g\rightarrow \infty} \max_{1\le i\le m} \rho (g x_i, x_i')= 0.
\end{equation}
From this we have that $K$ is a non-trivial weakly mixing subset of all orders: let $m\in \mathbb{N}$, and let $U_1, V_1, \cdots, U_m, V_m$ be open subsets of $X$ intersecting $K$, obviously we could choose distinct points $x_i\in U_i\cap K$ and $x_i'\in V_i\cap K$ for each $i= 1, \cdots, m$ (observing that $K$ is a Cantor set), and then by \eqref{1308151532} we choose $g\in G$ such that $\max_{1\le i\le m} \rho (g x_i, x_i')$ is small enough and so $g x_i\in V_i$ for each $i= 1, \cdots, m$, that is, $g\in N (U_1\cap K, V_1)\cap \cdots\cap N (U_m\cap K, V_m)$, finishing the proof.
\end{proof}

\section*{Acknowledgements}

 The authors thank Wen Huang for sharing his joint work \cite{HuangXuYi} and for his valuable discussions, and thank Hanfeng Li for pointing out an error when the authors was proving a stronger result than Proposition \ref{1307262129} in a previous preprint version of the file and for his important comments about current statement of Proposition \ref{1307262129}.

 Zhang is supported by FANEDD (201018) and NSFC (11271078).

\vskip 16pt

\bibliographystyle{amsplain}


\end{document}